\numberwithin{equation}{section}
\newtheorem{theorem}[equation]{Theorem}
\newtheorem{proposition}[equation]{Proposition}
\newtheorem{lemma}[equation]{Lemma}
\newtheorem{corollary}[equation]{Corollary}
\theoremstyle{definition}
\newtheorem{rmk}[equation]{Remark}
\newenvironment{remark}[1][]{\begin{rmk}[#1] \pushQED{\qed}}{\popQED \end{rmk}}
\newtheorem{rmks}[equation]{Remarks}
\newtheorem{eg}[equation]{Example}
\newenvironment{example}[1][]{\begin{eg}[#1] \pushQED{\qed}}{\popQED \end{eg}}
\newtheorem{defn}[equation]{Definition}
\newenvironment{definition}[1][]{\begin{defn}[#1]\pushQED{\qed}}{\popQED \end{defn}}
\newtheorem{altdefn}[equation]{Alternative Definition}
\newtheorem{ques}[equation]{Question}
\newtheorem{notn}[equation]{Notation}
\renewcommand{\phi}{\varphi}
\renewcommand{\emptyset}{\varnothing}
\newcommand{\init}{\mathrm{init}}
\def\Ddots{\mathinner{\mkern1mu\raise\p@
\vbox{\kern7\p@\hbox{.}}\mkern2mu
\raise4\p@\hbox{.}\mkern2mu\raise7\p@\hbox{.}\mkern1mu}}
\DeclareMathOperator{\height}{height}
\title{A note on toric ideals of graphs and Knutson-Miller-Yong Decompositions}
\author{Sergio Da Silva}
\author{Emma Naguit}
\author{Jenna Rajchgot}
\address[Sergio Da Silva]{
    Dept.\ of Mathematics and Economics, 
    Virginia State University, 
    1 Hayden Drive,
    Petersburg, Virginia 23806, USA}
\email{sdasilva@vsu.edu, smd322@cornell.edu}
\address[Emma Naguit]{
    Dept.\ of Mathematics and Statistics, 
    McMaster University, 
    1280 Main Street West, 
    Hamilton, Ontario L8S 4K1, Canada}
\email{naguite@mcmaster.ca}
\address[Jenna Rajchgot]{
    Dept.\ of Mathematics and Statistics, 
    McMaster University, 
    1280 Main Street West, 
    Hamilton, Ontario L8S 4K1, Canada}
\email{rajchgoj@mcmaster.ca}
\keywords{toric ideals of graphs, chromatic number, geometric vertex decomposition} 
\subjclass[2020]{Primary: 13P10, 14M25 ; Secondary:  05C15, 05E40, 05C25}
\begin{document}

\begin{abstract}
We use a Gr\"obner basis technique first introduced by Knutson, Miller and Yong to study the interplay between properties of a graph $G$ and algebraic properties of the toric ideal that it defines. We first recover a well-known height formula for the toric ideal of a graph $I_G$ and demonstrate an algebraic property that can detect when a graph deletion is bipartite. We also  bound the chromatic number $\chi(G)$ using information about an  initial ideal of $I_G$. 
\end{abstract}
\maketitle

Knutson, Miller and Yong introduced the notion of a geometric vertex decomposition in \cite{KMY} to study diagonal degenerations of Schubert varieties. It has since been generalized by the third author and Klein in \cite{KR} to geometric vertex decomposability, a concept which itself generalizes vertex decomposability of simplicial complexes associated to squarefree monomial ideals via the Stanley-Reisner correspondence. They also showed that geometric vertex decompositions are related to elementary $G$-biliaisons from liaison theory.

Toric ideals of graphs are especially amenable to the application of these decompositions because of the convenient description of their generating set using Gr\"obner bases. For example, in \cite{CDSRVT}, the geometric vertex decomposability of certain families of toric ideals of graphs was explored, and a framework for studying toric ideals of graphs with squarefree degenerations was established. Indeed, in this setting, the combinatorics of a graph $G$ can inform the algebraic properties of $I_G$. This viewpoint has been well-studied (see \cite{DAli},\cite{GHKKPVT}, \cite{GRV}, \cite{OH}, and  \cite{TT}, to name a few). It is natural to ask whether the reverse is also true. Can we detect properties of $G$ from algebraic properties of $I_G$?

In this short note on the topic, we study a particular decomposition of an initial ideal first studied by Knutson, Miller and Yong \cite{KMY} which we refer to as a Knutson-Miller-Yong (KMY) decomposition. This decomposition is a generalization of a geometric vertex decomposition introduced in the same paper. In this article, we use KMY decompositions to study the chromatic number of a graph $G$. We also recover the previously known height formula for toric ideals of graphs.

We refer the reader to \cite{CAH,HR} and \cite{Sturmfels,Villa} for the relevant background on graphs and toric ideals of graphs, respectively. Let $G = (V(G),E(G))$ be a finite simple graph with 
vertex set $V(G) =\{v_1,\ldots,v_n\}$ and edge
set $E(G) = \{e_1,\ldots,e_t\}$ where each $e_i = \{v_{j_i},v_{k_i}\}$.
Let $\mathbb{K}[E(G)] = \mathbb{K}[e_1,\ldots,e_t]$ be a polynomial
ring, where we treat the $e_i$'s as indeterminates.  Similarly,
let $\mathbb{K}[V(G)] = \mathbb{K}[v_1,\ldots,v_n]$.  Then the \emph{toric ideal of $G$}, denoted by $I_G$, is the kernel of the $\mathbb{K}$-algebra homomorphism $\varphi_G:\mathbb{K}[E(G)] \rightarrow \mathbb{K}[V(G)]$ defined by
\[\varphi_G(e_i) = v_{j_i}v_{k_i}  ~~\mbox{where $e_i = \{v_{j_i},v_{k_i}\}$ for
all $i \in \{1,\ldots,t\}$}.\]

A \emph{closed even walk} of $G$ is a sequence of vertices and edges \[\{v_{i_0},e_{i_1},v_{i_1},e_{i_2},\cdots,e_{i_k},
v_{i_k}\}\] 
such that $e_{i_j} = \{v_{i_{j-1}},v_{i_j}\}$, $v_{i_k} = v_{i_0}$, and $k$ is even.
Note that $e_{i_1}e_{i_3}\cdots e_{i_{2\ell-1}} - 
e_{i_2}e_{i_4}\cdots e_{i_{2\ell}}$ is a binomial, and by \cite[Proposition 10.1.5]{Villa}, $I_G$ is generated by the set of binomials of this form for all closed even walks of $G$. A binomial $u-v \in I_G$ is {\it primitive} if there is no other binomial $u'-v' \in I_G$ such that $u'|u$ and $v'|v$. By \cite[Proposition 10.1.9]{Villa}, the set of primitive binomials of $I_G$ which also correspond to closed even walks of $G$ define a universal Gr\"obner basis of $I_G$. Cycles are closed walks where all vertices are distinct, except for the endpoints. We will say that an edge $e$ belongs to a cycle if it appears in the list of vertices and edges defining the walk. In general, given a primitive binomial in $I_G$, we will refer to the subgraph of the binomial as the subgraph of $G$ consisting of the vertices and edges appearing in the walk.

Using KMY decompositions, we are able to recover a known result about the height of a toric ideal $I_G$ (see \cite{Villa} for example). Recall that a graph $G$ is {\it connected} if for any two pairs of vertices in $G$, there is a walk in $G$ between them.

\begin{theorem}\cite[Lem. 3.1 and Prop. 3.2]{Villa2}
Let $G$ be a connected graph with $p$ vertices, $q$ edges, and toric ideal $I_G\subset \mathbb{K}[E(G)]$. Then
\begin{equation*}
    \height(I_G)=
    \begin{cases}
      q-p & \text{if $G$ is not bipartite,}\\
      q-p+1 & \text{if $G$ is bipartite.}
    \end{cases}
  \end{equation*}
\end{theorem}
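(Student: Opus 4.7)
The plan is to induct on the cycle rank $q - p + 1$ of $G$, using a KMY decomposition with respect to a carefully chosen edge $e$ to reduce to a graph with one fewer independent cycle.

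For the base case $q - p + 1 = 0$, $G$ is a spanning tree, hence bipartite and cycle-free; there are no closed even walks, so $I_G = (0)$ and $\height(I_G) = 0 = q - p + 1$, matching the formula.

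For the inductive step, assume the formula for all connected graphs of strictly smaller cycle rank. Pick an edge $e$ lying on some cycle of $G$, so that $G \setminus e$ is still connected and has cycle rank one less. Equip $\mathbb{K}[E(G)]$ with a term order in which $e$ is the largest variable, and apply a KMY decomposition to $I_G$ with respect to $e$:
\[
\ini_e(I_G) = C_{e, I_G} \cap (N_{e, I_G} + (e)).
\]
The central task is to identify the two components combinatorially: (a) $N_{e, I_G} + (e) = I_{G \setminus e} + (e)$, which has height $\height(I_{G \setminus e}) + 1$ by induction on $G \setminus e$; and (b) $C_{e, I_G}$ is the ideal arising from the Gröbner basis elements of $I_G$ coming from closed even walks in which $e$ can be ``bypassed'' by replacing $e$ with an even-length path in $G \setminus e$ joining its endpoints, so that $C_{e, I_G}$ relates to the toric ideal of a graph obtained from $G$ by a suitable modification of the endpoints of $e$, whose height can again be computed by induction. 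Using that $\height(I_G) = \height(\ini_e(I_G))$ and that this height equals the minimum of the component heights in a primary decomposition, one then reads off the claimed value.

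The main obstacle is the combinatorial identification of $C_{e, I_G}$ together with the bookkeeping of bipartiteness. Deleting a single edge from a non-bipartite graph may yield either a bipartite or non-bipartite graph, according to whether every odd cycle of $G$ passes through $e$. The case analysis should therefore split on (i) whether $G$ is bipartite and (ii) whether $e$ lies on an odd cycle; in the non-bipartite case a judicious choice of $e$ (for example, an edge belonging to a shortest odd cycle) should make each subcase tractable, and in the bipartite case any $e$ on a cycle suffices since $G \setminus e$ remains bipartite. With these identifications in hand, verifying that the minimum of the two component heights reproduces $q - p$ or $q - p + 1$ is a matter of combining the inductive hypotheses for the two subgraphs.
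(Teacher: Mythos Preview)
Your outline has the right skeleton---induct by deleting a non-bridge edge and use a KMY decomposition---but there is a genuine gap in step (b), and the paper's proof shows how to avoid it entirely. You propose to identify $C_{e,I_G}$ combinatorially as the toric ideal of some ``modified'' graph obtained by rerouting $e$, and then compute its height by a second induction. No such identification is established (your description is not a well-defined graph operation, and $C_{e,I_G}$ is not in general a toric ideal of a graph), so the argument does not close. The paper never computes $\height(C_{e,I_G})$ at all: because $I_G$ is prime, $R/I_G$ is equidimensional, and for a \emph{nondegenerate} KMY decomposition one has $\height(I_G)=\height(N_{e,I_G})+1$ directly (Theorem~\ref{theorem: new KR 2.8}). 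Combined with $N_{e,I_G}=I_{G\setminus e}$ (Lemma~\ref{lemma:N ideal deletion ideal}), this reduces immediately to the inductive hypothesis for $G\setminus e$, with no minimum to take and no analysis of $C_{e,I_G}$ whatsoever.

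This shortcut also dissolves your second difficulty, the bipartiteness bookkeeping. Choosing $e$ on a shortest odd cycle does not guarantee that $e$ lies in any primitive closed even walk; when it does not, the KMY decomposition is degenerate (Lemma~\ref{lemma: trivial GVD iff not pcew}) and the height does not drop, which breaks your count. The paper instead picks $e$ to be a non-bridge edge lying in some primitive closed even walk (possible whenever $I_G\neq\langle 0\rangle$, Lemma~\ref{lemma: not trivial or bridge}); this forces nondegeneracy, and Proposition~\ref{prop:notBipartiteGVD} then shows that if $G$ is non-bipartite, so is $G\setminus e$. Thus bipartiteness is preserved throughout the induction with no case split on odd cycles. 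Finally, the paper's base case is $I_G=\langle 0\rangle$ (no primitive closed even walks), handled via Lemma~\ref{lemma: I_G=0}; note this is not the same as your base case of cycle rank zero, since a graph with a single odd cycle has $I_G=\langle 0\rangle$ but positive cycle rank.
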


A natural induction technique in graph theory is to use an edge deletion to reduce the number of edges in a graph. One graph property that can be detected in a graph deletion using KMY decompositions is the bipartite property. Recall that the chromatic number $\chi(G)$ of a graph $G$ is the minimum number of colors needed to color the vertices of a graph so that no two adjacent vertices receive the same color. A graph $G$ is \emph{bipartite} if its chromatic number satisfies $\chi(G)\leq 2$. The next result is a combination of Proposition \ref{prop:notBipartiteGVD} and Lemmas \ref{lemma:N ideal deletion ideal} and \ref{lemma: not trivial or bridge}.

\begin{theorem}
Suppose that $G$ is a non-bipartite finite simple graph and $y\in E(G)$ defines a nondegenerate KMY decomposition of $I_G$. Then $G\setminus y$ is not bipartite. Furthermore, if $G$ is connected, then there exists a sequence of edges $e_1,\ldots, e_k$ such that $G_i:=G\setminus \{e_1,\ldots, e_i\}$ (ie. the graph defined by deleting the edges $e_1,\ldots,e_i$ from $G$) is connected and non-bipartite for $1\leq i\leq k$, $I_{G_k}=\langle 0\rangle$, and $\height(I_G)=k$.
\end{theorem}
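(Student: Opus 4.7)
The first assertion of the theorem is exactly Proposition~\ref{prop:notBipartiteGVD}, so only the iterative statement requires an independent argument. The plan is to induct on $\height(I_G) = q-p$, using the displayed height formula for connected non-bipartite graphs from the preceding theorem.

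For the base case $q-p = 0$, the graph $G$ is connected and unicyclic, and its unique cycle must be odd since $G$ is non-bipartite. I would verify directly that no nontrivial primitive binomial exists in this situation: any closed even walk of $G$ must traverse the unique odd cycle an even number of times, and a double traversal of a $(2\ell+1)$-cycle produces identical monomials on odd and even walk positions (the cyclic shift by $2\ell+1$ exchanges them), so the corresponding binomial is zero. Hence $I_G = \langle 0 \rangle$ and the empty sequence with $k=0$ trivially satisfies the conclusion.

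For the inductive step with $\height(I_G) \geq 1$, the central task is to produce an edge $e_1 \in E(G)$ defining a nondegenerate KMY decomposition of $I_G$. Granted such an $e_1$, the three already-cited results combine as follows. Applying the first assertion of the present theorem (namely Proposition~\ref{prop:notBipartiteGVD}) to $G$ and $y=e_1$ yields that $G_1 := G \setminus e_1$ is non-bipartite. Lemma~\ref{lemma: not trivial or bridge} guarantees that $e_1$ is neither a trivial edge nor a bridge, so $G_1$ is connected. Lemma~\ref{lemma:N ideal deletion ideal} identifies the $N$-ideal of the KMY decomposition with $I_{G_1}$, and the standard height-reduction property of nondegenerate KMY decompositions gives $\height(I_{G_1}) = \height(I_G) - 1$. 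The inductive hypothesis applied to $G_1$ (which is connected and non-bipartite, with strictly smaller height) produces edges $e_2, \ldots, e_k$ whose successive deletions preserve connectedness and non-bipartiteness and drive the height to zero; concatenating $e_1$ with this sequence completes the induction.

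The main obstacle is the step I have taken for granted above: ensuring, at each inductive stage, the existence of an edge defining a nondegenerate KMY decomposition in a connected non-bipartite graph with $I_G \neq \langle 0 \rangle$. This requires a combinatorial analysis of the leading monomials (under a monomial order compatible with the KMY machinery) of the primitive binomials coming from the closed even walks of $G$, identifying a variable $y$ for which the initial ideal splits in the required nondegenerate way. The compatibility of this choice with the identification in Lemma~\ref{lemma:N ideal deletion ideal} and with the non-bridge conclusion of Lemma~\ref{lemma: not trivial or bridge} is the technical crux; once established, the remainder of the argument is a clean induction.
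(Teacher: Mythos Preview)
Your inductive strategy is the same as the paper's, but you have misread Lemma~\ref{lemma: not trivial or bridge}, and this misreading is what manufactures your ``main obstacle''. The lemma is an \emph{existence} statement: for a connected graph $G$ with $I_G \neq \langle 0 \rangle$, it produces a single edge $e$ that simultaneously (i) is not a bridge of $G$ and (ii) lies on some primitive closed even walk, hence (by Lemma~\ref{lemma: trivial GVD iff not pcew}) defines a nondegenerate KMY decomposition. You instead invoke it as if it asserted that \emph{every} edge defining a nondegenerate KMY decomposition is automatically a non-bridge. That implication is false: in a dumbbell graph (two vertex-disjoint odd cycles joined by a path), each edge of the connecting path lies on the unique primitive closed even walk, so defines a nondegenerate KMY decomposition, yet is a bridge.

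Once you use Lemma~\ref{lemma: not trivial or bridge} correctly---namely, to \emph{choose} $e_1$ with both properties at the outset of the inductive step---the step you flag as the technical crux disappears entirely: there is nothing further to establish about existence, and the argument proceeds exactly as you outline via Proposition~\ref{prop:notBipartiteGVD}, Lemma~\ref{lemma:N ideal deletion ideal}, and the height drop from Theorem~\ref{theorem: new KR 2.8}. (Your base case is fine but could be shortened: $I_G$ is prime, so $\height(I_G)=0$ already forces $I_G=\langle 0\rangle$ without the walk analysis.)
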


Finally, we can also use KMY decompositions to provide a bound on the general chromatic number of a graph. This is discussed further in Section 3 and summarized in the next theorem.

\begin{theorem}
Let $G$ be a finite simple connected graph defining the toric ideal $I_G \subset R=\mathbb{K}[e_1,\ldots,e_n]$ and let $<$ be a monomial order on $R$. Suppose that $\mathcal{E}\subseteq \{e_1,\ldots,e_n\}$ is a (possibly empty) minimal set of variables such that $\init_<(I_G)\subseteq \langle \mathcal{E}\rangle$. Then 
\[\chi(G)\leq |\mathcal{E}| +3.\]
\end{theorem}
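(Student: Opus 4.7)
The plan is to consider the subgraph $G' := G \setminus \mathcal{E}$ obtained by deleting the edges in $\mathcal{E}$ from $G$, and to prove that the toric ideal $I_{G'}$ vanishes. Since $I_{G'}$ decomposes as a sum of the toric ideals of its connected components in disjoint sets of variables, vanishing of $I_{G'}$ gives $\height(I_C) = 0$ for every component $C$ of $G'$. The height formula for toric ideals of graphs (the theorem above) then forces each $C$ to be either a tree ($|E(C)| = |V(C)| - 1$, bipartite case) or a unicyclic graph whose unique cycle is odd ($|E(C)| = |V(C)|$, non-bipartite case); both types are $3$-colorable, so $\chi(G') \leq 3$. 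Combining with the elementary bound $\chi(G) \leq \chi(G') + |\mathcal{E}|$ --- proved by induction on $|\mathcal{E}|$ from $\chi(H \cup e) \leq \chi(H) + 1$, since a proper coloring of $H$ extends to $H \cup e$ after recoloring one endpoint of $e$ with a fresh color when necessary --- this gives $\chi(G) \leq |\mathcal{E}| + 3$.

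The essential step is $I_{G'} = 0$. The natural inclusion of polynomial rings $\mathbb{K}[E(G')] \hookrightarrow R$ identifies $\varphi_{G'}$ with the restriction of $\varphi_G$, so $I_{G'} = I_G \cap \mathbb{K}[E(G')]$. If $f \in I_{G'}$ were nonzero, then $f \in I_G$ and hence $\init_<(f) \in \init_<(I_G) \subseteq \langle \mathcal{E} \rangle$, so $\init_<(f)$ would be divisible by some variable in $\mathcal{E}$. But $f$ --- and therefore $\init_<(f)$ --- involves only variables in $\{e_j : e_j \notin \mathcal{E}\}$, a contradiction. Thus $I_{G'} = 0$, as claimed.

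We do not anticipate a significant obstacle: once one observes that the hypothesis $\init_<(I_G) \subseteq \langle \mathcal{E} \rangle$ forces $I_{G \setminus \mathcal{E}}$ to vanish, the remainder is a direct combination of the height formula with a standard coloring manipulation. We note that the minimality of $\mathcal{E}$ plays no role in the argument --- only the containment $\init_<(I_G) \subseteq \langle \mathcal{E} \rangle$ is used.
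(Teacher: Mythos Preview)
Your proof is correct and takes a somewhat different route from the paper's. The paper argues by induction on $|\mathcal{E}|$: at each step it removes a single edge $e\in\mathcal{E}$, invokes the KMY-based lemma that $N_{e,I_G}=I_{G\setminus e}$ and that $\init_<(I_{G\setminus e})$ is generated by the minimal monomial generators of $\init_<(I_G)$ not divisible by $e$, checks that $\mathcal{E}\setminus\{e\}$ is again minimal, and combines the inductive bound with $\chi(G)\le\chi(G\setminus e)+1$. You instead delete all of $\mathcal{E}$ at once and use the elimination identity $I_{G'}=I_G\cap\mathbb{K}[E(G')]$ together with a one-line leading-term contradiction to obtain $I_{G'}=0$ directly; the structure of the components then follows from the height formula (equivalently, from the paper's Lemma on graphs with $I_G=\langle 0\rangle$, which would let you avoid the height formula altogether). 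Your approach is more elementary in that it bypasses the Gr\"obner-basis machinery of \cite[Theorem~2.1]{KMY}, and it handles the possible disconnectedness of $G\setminus\mathcal{E}$ explicitly. Your observation that minimality of $\mathcal{E}$ is not actually needed is also correct: the paper uses it only to preserve the induction hypothesis verbatim, but the conclusion holds under the bare containment $\init_<(I_G)\subseteq\langle\mathcal{E}\rangle$.
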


\noindent It is worth noting that Gr\"obner basis techniques have been used to study $\chi(G)$ in other contexts, such as in \cite{HW} where the chromatic number of a graph is related to whether a Gr\"obner basis of an $r$-coloring ideal has a certain form. There have also been other explicit algebraic descriptions of the chromatic number in terms of powers of the cover ideal ideal of a graph \cite{FHT}. Our bound provides an alternate description from the perspective of toric ideals of graphs.

\subsection*{Acknowledgements} 
This work is based on the thesis project of the second author in \cite{Naguit}. The first author was supported in part by an NSERC Postdoctoral Fellowship of Canada 546010-2020. The second author was supported in part by an NSERC USRA and the third author's grant. The third author was supported in part by NSERC Discovery Grant 2023-04800.

\section{KMY Decompositions and Graph-theoretic Properties}

In this section we will explore an ideal decomposition introduced by Knutson, Miller and Yong in \cite{KMY} and its relationship to toric ideals of graphs. 

Let $\mathbb{K}$ be an arbitrary field and $R=\mathbb{K}[x_1,\ldots,x_n]$ be the polynomial ring over $\mathbb{K}$ in $n$ indeterminates. Fix $y = x_j$ for some $1\le j \le n$. We define the \textbf{initial $y$-form} of a polynomial $f\in R$ to be the sum of all terms of $f$ having the highest power of $y$, and we denote this by $\init_yf$. More specifically, for a nonzero polynomial $f\in R$, we can write it as the sum $f = \sum_{i=0}^{n}\ {\textbf{a}_iy^i}$ where each $\textbf{a}_i \in \mathbb{K}[x_1,\ldots,\hat{y},\ldots,x_n]$ and $\textbf{a}_n \neq 0$, so that $\init_yf = \textbf{a}_n y^n$. Note that $\mathbb{K}[x_1,\ldots,\hat{y},\ldots,x_n]$ denotes the polynomial ring over $\mathbb{K}$ defined by all indeterminates $x_1,\ldots,x_n$ except $y$. For an ideal $I \subseteq R$, we define \[\init_yI := \langle \init_yf \mid f \in I \rangle.\] We say that a monomial order is \textbf{$y$-compatible} if $\init_{<}f = \init_{<}(\init_yf)$ for every $f\in R$. We refer the reader to \cite{CLO,EH} for the relevant background on Gr\"obner bases.

\begin{definition}\label{def:KMYD}
Let $I \subseteq R$ be an ideal and $<$ be a $y$-compatible monomial order. Let $G = \{ y^{d_i}q_i + r_i \mid 1 \le i \le m \}$ be a Gr\"obner basis of $I$ where $y$ does not divide any $q_i$ and $\init_y(y^{d_i}q_i + r_i) = y^{d_i}q_i$. 
Let us define two ideals of $R$: \[C_{y,I} := \langle q_i \mid 1 \le i \le m \rangle \hspace{1cm} N_{y,I} := \langle q_i \mid d_i = 0 \rangle.\] Then by \cite[Theorem 2.1]{KMY}, we necessarily have that $\sqrt{\init_yI} = \sqrt{C_{y,I}} \cap \sqrt{N_{y,I} + \langle y \rangle}$. We call this decomposition of $\sqrt{\init_y I}$ a \textbf{Knutson-Miller-Yong decomposition of} \bm{$I$} \textbf{with respect to} \bm{$y$} (or a KMY decomposition for short). It also follows from \cite[Theorem 2.1]{KMY} that $\init_{y}I = \langle y^{d_i}q_i \mid 1 \le i \le m \rangle$ and the given generating sets for $C_{y,I}, N_{y,I}$ are Gr\"obner bases for these ideals \cite[Theorem 2.1]{KMY}.
\end{definition}

\begin{remark}
If we have that $\init_yI = C_{y,I} \cap (N_{y,I} + \langle y \rangle)$, then this decomposition is called a \textbf{geometric vertex decomposition} of $I$ with respect to $y$, as defined by Knutson, Miller and Yong in \cite{KMY}. This notion was later used by Klein and Rajchgot in \cite{KR} to establish a connection to liaison theory. The definitions and results that follow in this section were originally defined in the context of geometric vertex decompositions rather than Knutson-Miller-Yong decompositions.
\end{remark}

\subsection{Properties of KMY Decompositions}

We start by defining nondegenerate KMY decompositions which will be useful in the context of toric ideals of graphs.

\begin{definition}
Let $I$ and $y$ be as defined in Definition \ref{def:KMYD}. A Knutson-Miller-Yong decomposition is \textbf{degenerate} if $\sqrt{C_{y,I}} = \sqrt{N_{y,I}}$ or if $C_{y,I} = \langle 1 \rangle$. It is \textbf{nondegenerate} otherwise.
\end{definition}

For the decompositions of toric ideals of graphs that we will be considering, the case where $C_{y,I}=\langle 1 \rangle$ doesn't occur, so we focus on characterizing the other instance of degenerate KMY decompositions. The next result is a slight adaptation of \cite[Proposition 2.4]{KR} to the language of KMY decompositions. Specifically, we have replaced the term ``geometric vertex decomposition'' in \cite[Proposition 2.4]{KR} with ``KMY decomposition'' in Proposition \ref{lemma: new KR 2.4} below. Nevertheless, the proof of Proposition \ref{lemma: new KR 2.4} is identical to the proof of \cite[Proposition 2.4]{KR} and so is omitted.

\begin{proposition} \label{lemma: new KR 2.4}
\cite[Proposition 2.4]{KR} Let $I$ be a radical ideal of $R$ and let $<$ be a $y$-compatible monomial order on $R$ and suppose that $\sqrt{C_{y,I}}=\sqrt{N_{y,I}}$ (so that $y$ defines a degenerate KMY decomposition). Then the reduced Gr\"obner basis of $I$ does not involve $y$, and $I = \init_{y} I = C_{y,I} = N_{y,I}$.
\end{proposition}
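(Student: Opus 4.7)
The plan is to reduce everything to the single fact that every $q_i$ in the defining Gröbner basis $\{y^{d_i}q_i + r_i\}_{i=1}^{m}$ already lies in $I$. Combining the hypothesis $\sqrt{C_{y,I}} = \sqrt{N_{y,I}}$ with the identity $\sqrt{\init_y I} = \sqrt{C_{y,I}} \cap \sqrt{N_{y,I}+\langle y \rangle}$ from \cite[Thm.~2.1]{KMY}, together with $N_{y,I}\subseteq N_{y,I}+\langle y \rangle$, one obtains $\sqrt{\init_y I}=\sqrt{N_{y,I}}$. In particular each generator $y^{d_i} q_i$ of $\init_y I$ has some power $y^{kd_i}q_i^k \in N_{y,I}$. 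Write $S = \mathbb{K}[x_1,\ldots,\hat y,\ldots,x_n]$ so that $R = S[y]$ is a free $S$-module on $\{y^n\}_{n\ge 0}$; since the generators $\{q_j : d_j=0\}$ of $N_{y,I}$ lie in $S$, any element of $N_{y,I}$ expanded as $\sum_n b_n y^n$ must have each $b_n \in N_{y,I}\cap S$. Applied to $y^{kd_i}q_i^k$ this forces $q_i^k \in N_{y,I}\cap S \subseteq N_{y,I}\subseteq I$, the last containment because each $q_j$ with $d_j=0$ is itself a Gröbner basis element of $I$. Radicality of $I$ then yields $q_i \in I$ for every $i$.

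From this I show that the reduced Gröbner basis of $I$ lies entirely in $S$. Since $\{y^{d_i}q_i+r_i\}$ is a Gröbner basis and $<$ is $y$-compatible, $\init_< I = \langle y^{d_i}\init_<(q_i) : 1 \le i \le m\rangle$. Because $q_i \in I$, we have $\init_<(q_i)\in\init_< I$; for each $i$ with $d_i>0$ the generator $y^{d_i}\init_<(q_i)$ is a proper multiple of $\init_<(q_i)$ and is therefore redundant. Thus the minimal monomial generators of $\init_< I$ form a subset of $\{\init_<(q_i) : d_i = 0\}$, which are $y$-free. Elements of the reduced Gröbner basis have these monomials as leading terms, so their leading terms are $y$-free; by $y$-compatibility, any polynomial whose leading term is $y$-free must itself be $y$-free, so the reduced Gröbner basis lies in $S$.

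Finally, I assemble the equalities $I = \init_y I = C_{y,I} = N_{y,I}$. Since the reduced Gröbner basis sits in $S$ and its elements $h_\ell$ satisfy $\init_y h_\ell = h_\ell$, we get $I\subseteq \init_y I$; conversely each $y^{d_i} q_i \in I$ (as $q_i\in I$), so $\init_y I\subseteq I$. From $q_i \in I$ we get $C_{y,I}\subseteq I$, while $I = \init_y I = \langle y^{d_i}q_i\rangle \subseteq \langle q_i\rangle = C_{y,I}$. For $N_{y,I}=I$, minimization of the defining Gröbner basis removes every element with $d_i>0$, because its leading monomial is divisible by $\init_<(q_j)$ for some $j$ with $d_j=0$ by the previous paragraph; the resulting minimal Gröbner basis of $I$ lies in $\{q_j : d_j = 0\}$, so $I \subseteq N_{y,I} \subseteq I$.

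The main technical obstacle is the coefficient-extraction step in the first paragraph, where the free-module structure of $R$ over $S$ is used to convert the radical membership $y^{kd_i}q_i^k \in N_{y,I}$ into the concrete statement $q_i^k \in N_{y,I}\cap S$. Once $q_i \in I$ is established for every $i$, the remaining containments follow from standard Gröbner-basis bookkeeping together with $y$-compatibility.
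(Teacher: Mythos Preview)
The paper itself omits the proof of this proposition, referring instead to \cite[Proposition~2.4]{KR} and noting that the argument there carries over verbatim to the KMY setting. Your argument is correct and self-contained: the key step---using the free $S$-module structure of $R=S[y]$ to extract $q_i^k\in N_{y,I}$ from $y^{kd_i}q_i^k\in N_{y,I}$, and then invoking radicality of $I$ to get $q_i\in I$---is exactly the right idea, and the subsequent Gr\"obner bookkeeping is sound. One small point worth making explicit in your second paragraph: when you say the $d_i>0$ generators $y^{d_i}\init_<(q_i)$ are redundant, the reason this lets you discard \emph{all} of them simultaneously is that each $\init_<(q_i)$, being $y$-free and lying in $\init_< I=\langle y^{d_j}\init_<(q_j)\rangle$, must be divisible by some $\init_<(q_j)$ with $d_j=0$; hence $\{\init_<(q_j):d_j=0\}$ already generates $\init_< I$. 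You use this implicitly, and it is what makes the final $N_{y,I}=I$ step go through as well.
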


\begin{remark}
The theorem is not true if $I$ is not a radical ideal. For example, if $I=\langle yx,x^2\rangle$, then $C_{y,I} = \langle x\rangle$ and $N_{y,I}=\langle x^2\rangle$, so $\sqrt{C_{y,I}}=\sqrt{N_{y,I}}$. Therefore, $y$ defines a degenerate KMY decomposition of $I$, but the reduced Gr\"obner basis of $I$ involves $y$.
\end{remark}

The final result of this subsection tells us important information about the relationship between nondegenerate KMY decompositions and the height of the ideal. Recall that given an ideal $I\subset R$, we say that $R/I$ is \emph{equidimensional} if $\dim(R/P)=\dim(R/I)$ for all minimal primes $P$ of $I$.

\begin{theorem}\label{theorem: new KR 2.8}
Let $I\subseteq R$ be an ideal such that $R/I$ is equidimensional. Suppose that $I$ has a reduced Gr\"obner basis with respect to a $y$-compatible order $<$ of the form
\[ I = \langle y^{d_1}q_1+r_1, y^{d_2}q_2+r_2,\ldots,y^{d_k}q_k+r_k, h_1,\ldots,h_s\rangle\]

\noindent where $y$ does not divide any term of $q_i$ and $h_j$ for $1 \le i \le k$ and $1 \le j \le s$. If $y$ defines a nondegenerate Knutson-Miller-Yong decomposition of $I$, then \[\height(C_{y,I})=\height(I)=\height(N_{y,I})+1.\]
Furthermore, $R/C_{y,I}$ is equidimensional.
\end{theorem}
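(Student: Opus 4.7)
The plan is to mirror the proof of \cite[Theorem 2.8]{KR} within the KMY framework, with the decomposition $\sqrt{\init_y I}=\sqrt{C_{y,I}}\cap\sqrt{N_{y,I}+\langle y\rangle}$ as the basic tool. Two standing facts get things going. First, $I\mapsto \init_y I$ is a flat Gr\"obner degeneration, so $\height(\init_y I)=\height(I)$, and the standard fact that flat families over $\mathbb{A}^1$ with equidimensional generic fibers have equidimensional special fibers forces $R/\init_y I$ to be equidimensional too; in particular every minimal prime of $\init_y I$ has height exactly $\height(I)$. Second, because the $q_i$ generating $N_{y,I}$ lie in $R':=\mathbb{K}[x_1,\ldots,\hat y,\ldots,x_n]$, the element $y$ is a nonzerodivisor on $R/N_{y,I}$, so $\height(N_{y,I}+\langle y\rangle)=\height(N_{y,I})+1$. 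The containments $\init_y I\subseteq C_{y,I}$ and $\init_y I\subseteq N_{y,I}+\langle y\rangle$ then yield $\height(C_{y,I})\ge\height(I)$ and $\height(N_{y,I})+1\ge\height(I)$, with the minimum of the two equal to $\height(I)$.

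Next, I would upgrade both inequalities to equalities using nondegeneracy. Suppose first that $\height(N_{y,I})\ge\height(I)$, so every minimal prime $P$ of $N_{y,I}+\langle y\rangle$ has height at least $\height(I)+1$. Then $P$ cannot be a minimal prime of $\init_y I$, so it properly contains some minimal prime $P'$ of $\init_y I$; and $P'$ cannot be another minimal prime of $N_{y,I}+\langle y\rangle$ strictly smaller than $P$, so it must be a minimal prime of $C_{y,I}$. Thus $V(N_{y,I}+\langle y\rangle)\subseteq V(C_{y,I})$. Both varieties are cylinders over $\Spec R'$, namely $V(N_{y,I})=V(N'_{y,I})\times\mathbb{A}^1_y$ and $V(C_{y,I})=V(C'_{y,I})\times\mathbb{A}^1_y$, since the generators of each ideal lie in $R'$; projecting the slice $V(N'_{y,I})\times\{0\}\subseteq V(C'_{y,I})\times\mathbb{A}^1_y$ to $\Spec R'$ therefore gives $V(N'_{y,I})\subseteq V(C'_{y,I})$, hence $V(N_{y,I})\subseteq V(C_{y,I})$. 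Combined with the reverse inclusion from $N_{y,I}\subseteq C_{y,I}$, this yields $\sqrt{N_{y,I}}=\sqrt{C_{y,I}}$, contradicting nondegeneracy. So $\height(N_{y,I})=\height(I)-1$.

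A parallel argument, this time exploiting $C_{y,I}\ne\langle1\rangle$, handles the $C$ side. If $\height(C_{y,I})>\height(I)$, then each minimal prime $Q$ of $C_{y,I}$ has height exceeding $\height(I)$ and so properly contains some minimal prime $P$ of $\init_y I$. As before $P$ cannot be another minimal prime of $C_{y,I}$, hence it must be a minimal prime of $N_{y,I}+\langle y\rangle$, forcing $y\in P\subseteq Q$. On the other hand the isomorphism $R/C_{y,I}\cong(R'/C'_{y,I})[y]$ shows that every minimal prime of $C_{y,I}$ is extended from a minimal prime of $C'_{y,I}$ in $R'$ and therefore avoids $y$, contradicting $y\in Q$. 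Hence $\height(C_{y,I})=\height(I)$.

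Finally, the equidimensionality of $R/C_{y,I}$ follows from the same dichotomy applied to an arbitrary minimal prime $Q$ of $C_{y,I}$: if $\height Q>\height(I)$ then $Q$ properly contains a minimal prime $P$ of $\init_y I$, which must be a minimal prime of $N_{y,I}+\langle y\rangle$ (for the same incomparability reason), forcing $y\in Q$ and contradicting the extended form of $C_{y,I}$; thus every minimal prime of $C_{y,I}$ has height exactly $\height(I)$. The only genuinely delicate ingredient is the preservation of equidimensionality under the flat degeneration $I\rightsquigarrow\init_y I$, which I would either invoke as a standard consequence of flatness of $R[t]/\mathcal{I}$ over $\mathbb{K}[t]$ or deduce directly from the fact that no irreducible component of the total space is concentrated on $t=0$; once that is granted, everything reduces to careful bookkeeping with the minimal primes on each side of the KMY decomposition and the cylinder structures of $V(C_{y,I})$ and $V(N_{y,I})$.
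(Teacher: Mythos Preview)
Your proposal is correct and follows essentially the same route as the paper: the authors simply state that the argument is \emph{mutatis mutandis} that of \cite[Lemma~2.8]{KR}, after noting $\sqrt{N_{y,I}+\langle y\rangle}=\sqrt{N_{y,I}}+\langle y\rangle$ and replacing $\init_y I$, $C_{y,I}$, $N_{y,I}$ by their radicals. Your write-up is precisely an expanded version of that argument carried out at the level of the KMY decomposition $\sqrt{\init_y I}=\sqrt{C_{y,I}}\cap\sqrt{N_{y,I}+\langle y\rangle}$, so there is no substantive difference in strategy.
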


\begin{proof}
We first note that since $y$ does not appear in the generating set of $N_{y,I}$, that $\sqrt{N_{y,I}+\langle y\rangle}= \sqrt{N_{y,I}}+\langle y\rangle$. The proof now follows \emph{mutatis mutandis} from \cite[Lemma 2.8]{KR} with $\sqrt{\init_y I}$, $\sqrt{C_{y,I}}$ and $\sqrt{N_{y,I}}$ replacing the ideals $\init_y I$, $C_{y,I}$ and $N_{y,I}$ in that proof, respectively. Furthermore, $R/\sqrt{C_{y,I}}$ is equidimensional from the same proof, which implies that $R/C_{y,I}$ is equidimensional. \qedhere

\end{proof}

\subsection{KMY Decompositions and Toric Ideals of Graphs}

In this section, we consider KMY decompositions for the case where $I=I_G$ is a toric ideal of a graph. We will abuse notation and refer to $y\in E(G)$ as both an edge of $G$ and as an indeterminate in the ring $\mathbb{K}[E(G)]$. We refer the reader to \cite{RTT} for the relevant definitions and background. 

The next theorem follows from \cite[Corollary 3.3]{RTT} on the structure of primitive closed even walks, together with the translation between primitive closed even walks and generators of toric ideals of graphs.

\begin{theorem}\label{theorem: pcew classification}
Let $G$ be a finite simple graph. Suppose that $W$ is a connected subgraph which is the graph of a walk defining a primitive binomial $\Gamma=u-v \in I_G$. Then
\begin{enumerate}[(i)]
    \item $W$ is either an even cycle or contains at least two odd cycles with no overlap in their edges. 
    \item $\Gamma$ has degree at least 2.
    \item If $y^d$ divides $u$, then $d\leq 2$ and $\deg(u)>d$.
    \item If $y|u$, then $y\nmid v$.
\end{enumerate}

\end{theorem}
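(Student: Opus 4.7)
The plan is to translate each of (i)--(iv) from statements about the walk $W$ into algebraic statements about $\Gamma=u-v$, and then invoke the structure theorem for primitive closed even walks (\cite[Corollary 3.3]{RTT}) together with elementary properties of primitive binomials in toric ideals of graphs. Part (i) will be essentially a direct restatement of \cite[Corollary 3.3]{RTT}: a primitive closed even walk in a finite simple graph is either a single even cycle or decomposes into two (or more) edge-disjoint odd cycles joined by paths, where the joining paths may be trivial so that two odd cycles meet at a single vertex. Part (ii) will then follow at once: in a simple graph the smallest even cycle has length $4$, giving $\deg \Gamma = 2$, while any configuration involving two odd cycles uses at least $6$ edges and thus gives $\deg \Gamma \geq 3$.

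For (iv) I will argue by contradiction. If both $y \mid u$ and $y \mid v$, write $u = y u'$ and $v = y v'$. Then $\varphi_G(u) = \varphi_G(v)$ gives $\varphi_G(y) \varphi_G(u') = \varphi_G(y) \varphi_G(v')$ in the integral domain $\mathbb{K}[V(G)]$, and cancellation by the nonzero monomial $\varphi_G(y)$ yields $u' - v' \in I_G$. Since $u' \mid u$, $v' \mid v$ with $(u',v') \ne (u,v)$, and $u' \ne v'$ (else $\Gamma = 0$), this contradicts the primitivity of $\Gamma$.

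For (iii), I will extract from the structure theorem the stronger observation that every edge of a primitive closed even walk is traversed at most twice: cycle edges are traversed exactly once, and any bridge edge lying on a path joining two odd cycles is traversed exactly twice (once in each direction). Consequently $y$ appears at most twice among the odd-position edges of $W$, forcing $d \le 2$. To show $\deg(u) > d$, I will suppose for contradiction that $u = y^d$ and let $y = \{a,b\}$; then $\varphi_G(v) = v_a^d v_b^d$ forces every edge occurring in $v$ to have both endpoints in $\{a,b\}$, and since $G$ is simple (so has no loops and only the single edge $y$ between $a$ and $b$) this forces $v = y^d = u$, making $\Gamma = 0$, which is impossible.

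The main potential obstacle is ensuring that \cite[Corollary 3.3]{RTT} explicitly records the edge-multiplicity information needed for (iii); if the cited corollary only classifies the combinatorial shape of $W$ and not the edge-traversal counts, a short case-by-case check on each listed configuration (even cycle vs.\ two odd cycles sharing a vertex vs.\ two odd cycles joined by a non-trivial path) supplies the ``at most twice'' bound, with no essentially new idea required.
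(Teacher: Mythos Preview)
Your proposal is correct and follows essentially the same approach as the paper. The paper's proof is extremely terse: it simply states that (i)--(iii) follow from the structure theorem for primitive closed even walks in \cite[Corollary 3.3]{RTT} together with \cite[Lemma 3.1]{DAli}, and that (iv) follows from primitivity via exactly the cancellation argument you give (if $\Gamma = y(u'-v')$ then $u'-v' \in I_G$). Your write-up fills in the details the paper omits---in particular, your direct argument that $u=y^d$ forces $v=y^d$ via the simplicity of $G$ is a clean self-contained way to get $\deg(u)>d$---but the underlying strategy is identical.
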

\begin{proof}
The first three statements follow immediately from the structure of primitive closed even walks described in \cite[Corollary 3.3]{RTT} and \cite[Lemma 3.1]{DAli}. The fourth statement follows from the definition of primitive (if $\Gamma=y(u'-v')$, then $\varphi_G(\Gamma)=\varphi_G(y)\varphi_G(u'-v')=0$ so $u'-v'\in I_G$).
\end{proof}

\begin{lemma}\label{lemma: trivial GVD iff not pcew}
Let $G$ be a finite simple graph and $y\in E(G)$. Then $y$ defines a degenerate KMY decomposition of the toric ideal $I_G$ if and only if $y$ is not part of any primitive closed even walk of $G$. 
\end{lemma}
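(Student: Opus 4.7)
The plan is to prove the two directions separately. For the reverse direction, suppose no primitive closed even walk of $G$ involves $y$. Since the primitive binomials coming from closed even walks form a universal Gr\"obner basis of $I_G$, there is a Gr\"obner basis of $I_G$ consisting entirely of binomials not involving $y$. In the notation of Definition \ref{def:KMYD} applied to any $y$-compatible order, every basis element has $d_i = 0$ and $r_i = 0$, so $C_{y, I_G} = N_{y, I_G} = I_G$, giving a degenerate KMY decomposition.

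For the forward direction, the strategy is to reduce degeneracy to a structural statement about $I_G$ via Proposition \ref{lemma: new KR 2.4}, and then to derive a contradiction using primeness of $I_G$. First note that $C_{y, I_G} \neq \langle 1 \rangle$: using the universal Gr\"obner basis of primitive binomials, each $q_i$ has degree at least one by Theorem \ref{theorem: pcew classification}(ii)--(iii). Since $I_G$ is prime (hence radical), degeneracy forces $\sqrt{C_{y, I_G}} = \sqrt{N_{y, I_G}}$, so Proposition \ref{lemma: new KR 2.4} gives that the reduced Gr\"obner basis of $I_G$ in the chosen $y$-compatible order does not involve $y$. Setting $R' = \mathbb{K}[E(G) \setminus \{y\}]$, one checks directly from the definition of $\varphi_G$ that $I_G \cap R' = I_{G \setminus y}$, so $I_G = I_{G \setminus y} \cdot R$ and hence $R/I_G \cong (R'/I_{G \setminus y})[y]$ with $y$ transcendental over $R'/I_{G \setminus y}$.

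Now suppose for contradiction that some primitive binomial $u - v \in I_G$ has $y \mid u$. By Theorem \ref{theorem: pcew classification}(iv), $y \nmid v$, and writing $u = y^d u'$ with $y \nmid u'$ and $d \geq 1$, Theorem \ref{theorem: pcew classification}(ii)--(iii) gives $\deg u' \geq 1$. The relation $u \equiv v \pmod{I_G}$ becomes $y^d \bar{u'} = \bar{v}$ in $(R'/I_{G \setminus y})[y]$, where $\bar{u'}, \bar{v}$ denote the images of $u', v$. Comparing coefficients of powers of $y$ forces $\bar{u'} = 0$, so the nonconstant monomial $u'$ lies in the prime ideal $I_G$, contradicting the fact that $\varphi_G$ sends monomials to nonzero monomials. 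The main obstacle I expect is cleanly justifying the identification $R/I_G \cong (R'/I_{G \setminus y})[y]$, though this follows quickly once we know $I_G$ is generated by elements of $R'$.
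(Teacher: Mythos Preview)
Your proof is correct, and the reverse direction matches the paper's argument essentially verbatim. The forward direction is also correct but takes a longer route than the paper. After invoking Proposition~\ref{lemma: new KR 2.4}, the paper uses the conclusion $I_G = \init_y(I_G)$ directly: since the primitive binomial $y^d u' - v$ has initial $y$-form $y^d u'$, the monomial $y^d u'$ lies in $\init_y(I_G) = I_G$, an immediate contradiction. You instead use the other conclusion of Proposition~\ref{lemma: new KR 2.4} (that the reduced Gr\"obner basis avoids $y$), then pass through the identification $R/I_G \cong (R'/I_{G\setminus y})[y]$ and compare coefficients to force $u' \in I_G$. This works, but the detour through $I_{G\setminus y}$ and the polynomial-ring structure is unnecessary: once you know $I_G = \init_y(I_G)$, the contradiction is one line. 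Your approach does have the minor side benefit of making the relation $I_G \cap R' = I_{G\setminus y}$ explicit, which is in the spirit of Lemma~\ref{lemma:N ideal deletion ideal}, but for the purposes of this lemma the paper's shortcut is cleaner.
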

\begin{proof}
Suppose that $y$ defines a degenerate KMY decomposition of $I_G$. Towards a contradiction, assume that $y$ is part of a primitive closed even walk of $G$. Then there exists a binomial of the form $y^dm-n$ where $y$ does not divide $m$, 
where $1\leq d$ (in fact $d\leq 2$ by Theorem \ref{theorem: pcew classification}) and $\mathrm{deg}(m)\geq 1$, 
with $y^dm\in \init_y(I_G)$. This implies that $C_{y,I}\neq\langle 1\rangle$, so $y$ defining a degenerate KMY decomposition implies that  $\sqrt{C_{y,I}}=\sqrt{N_{y,I}}$. Since $I_G$ is the toric ideal of a graph, and hence is prime, by Proposition \ref{lemma: new KR 2.4} we have $\init_y(I_G) = I_G$. Therefore $y^dm\in I_G$, contradicting the fact that there are no monomials in $I_G$.

For the reverse direction, suppose that $y$ does not divide any term of any primitive binomial of $I_G$. Then $\init_y(g) = g$ for all primitive binomials $g$ (the set of which defines a Gr\"obner basis of $I_G$ with respect to any monomial order). Therefore, $N_{y,I_G} = C_{y,I_G}=I_G$, so that $\sqrt{C_{y,I}}=\sqrt{N_{y,I}}$. Therefore $y$ defines a degenerate KMY decomposition of $I_G$.
\end{proof}

\begin{remark}
By the arguments in the above proof, a degenerate KMY decomposition of the form $C_{y,I}=\langle 1\rangle$ is not possible for $I=I_G$, so degenerate here can only describe the case where $\sqrt{C_{y,I}}=\sqrt{N_{y,I}}$. 
\end{remark}

\noindent

\begin{lemma}\label{lemma:only in odd}
Let $G$ be a finite simple graph and let $y\in E(G)$. If $G$ is not bipartite and $G\setminus y$ is bipartite 
then $y$ does not belong to any even cycle of $G$. Furthermore, $y$ belongs to every odd cycle of $G$.
\end{lemma}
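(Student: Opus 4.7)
The plan is to first establish the \emph{Furthermore} statement, since it provides the key input to the main argument. Recall that a graph is bipartite if and only if it contains no odd cycles (equivalently, no closed walks of odd length). If $C$ were an odd cycle of $G$ not containing $y$, then $C$ would be an odd cycle of $G\setminus y$, contradicting the bipartiteness of $G\setminus y$. Hence every odd cycle of $G$ must use the edge $y$.

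For the main statement, I would argue by contradiction. Write $y=\{u,v\}$ and suppose $y$ lies on some even cycle $C$ of $G$. Then $P_1 := C\setminus \{y\}$ is a $(u,v)$-path of odd length. Since $G$ is not bipartite, it contains at least one odd cycle $C'$, and by the part already proved we must have $y\in C'$. Consequently $P_2 := C'\setminus \{y\}$ is a $(u,v)$-path of even length. Concatenating $P_1$ with the reverse of $P_2$ produces a closed walk in $G\setminus y$ whose length is odd (odd $+$ even). But any closed walk of odd length contains an odd cycle as a subgraph, which would exhibit an odd cycle in $G\setminus y$, contradicting bipartiteness.

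The only mildly delicate point is the standard graph-theoretic fact that an odd closed walk contains an odd cycle; this is a well-known consequence of iteratively removing even closed subwalks, and I would just cite it rather than reprove it. Everything else is a direct parity computation, so no substantive obstacle is anticipated.
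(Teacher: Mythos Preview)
Your proof is correct, but it takes a different route from the paper's. For the \emph{Furthermore} clause both arguments are identical: an odd cycle avoiding $y$ would survive in $G\setminus y$. For the main claim, however, the paper argues directly from a bipartition of $G\setminus y$: since adding $y$ destroys bipartiteness, both endpoints of $y$ lie in the same part of the bipartition, so every $(u,v)$-path in $G\setminus y$ has even length, and hence every cycle through $y$ is odd. Your argument instead proceeds by contradiction, pairing a hypothetical even cycle through $y$ with a guaranteed odd cycle through $y$ to manufacture an odd closed walk in $G\setminus y$, then invoking the standard fact that such a walk contains an odd cycle. The paper's bipartition argument is a bit cleaner in that it avoids this auxiliary lemma and yields the stronger positive statement (all cycles through $y$ are odd) in one stroke; your approach, on the other hand, is self-contained at the level of cycles and walks without ever naming a bipartition, which some readers may find more combinatorially transparent.
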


\begin{proof}
We refer to Figure \ref{fig:bp to nbp} to demonstrate the ideas in this proof. Since $G\setminus y$ is bipartite, there exists a partition of the vertices $V(G\setminus y)$ into the sets $V_A$ and $V_B$. As adding $y$ creates a non-bipartite graph $G$, without loss of generality, we can assume that $y$ must be incident to two vertices $v_{a_1}, v_{a_2} \in V_A$, as shown in Figure \ref{fig:bp to nbp}. Furthermore, as a graph is bipartite if and only if all of its cycles are even, adding $y$ creates at least one odd cycle in $G$, and $y$ must be an edge in all odd cycles of $G$. The lemma now follows by observing  that any path from $v_{a_1}$ to $v_{a_2}$ in $G\setminus y$ must have even length. Therefore, any cycle of $G$ through $y$ (which is comprised of the edge $y$ and a path from $v_{a_1}$ to $v_{a_2}$) must have odd length (in our figure, $y$ is part of a 3-cycle). \qedhere

\begin{figure}[h!]
    \centering
    \includegraphics[scale=1]{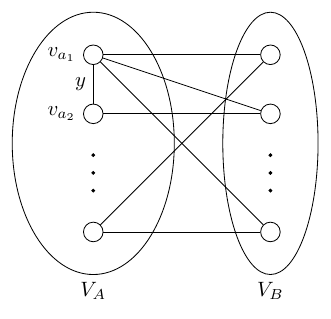}
    \caption{A non-bipartite graph $G$ such that $G\setminus y$ is bipartite.}
    \label{fig:bp to nbp}
\end{figure}
\end{proof}

The next result uses KMY decompositions to detect when a graph deletion of a non-bipartite graph is a bipartite graph.

\begin{proposition}\label{prop:notBipartiteGVD}
Suppose that $G$ is a non-bipartite finite simple graph and $y\in E(G)$ defines a nondegenerate KMY decomposition of $I_G$. Then $G\setminus y$ is not bipartite.
\end{proposition}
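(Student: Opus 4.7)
The plan is to argue by contradiction using Lemma \ref{lemma: trivial GVD iff not pcew}: we will suppose that $G\setminus y$ is bipartite and then show that $y$ cannot appear in any primitive closed even walk of $G$, which forces the KMY decomposition to be degenerate, contradicting the hypothesis.

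First, I would invoke Lemma \ref{lemma:only in odd} to extract two structural facts about $y$ under the assumption that $G$ is non-bipartite while $G\setminus y$ is bipartite: namely, $y$ lies on no even cycle of $G$, and $y$ lies on every odd cycle of $G$. These two facts together severely restrict which primitive binomial subgraphs of $I_G$ can contain $y$.

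Next I would appeal to the classification in Theorem \ref{theorem: pcew classification}(i). Let $W$ be the subgraph of any primitive binomial of $I_G$ that uses the edge $y$. Then $W$ is either an even cycle of $G$ or a union of at least two odd cycles sharing no edges. The first alternative is immediately excluded because $y$ is on no even cycle of $G$. In the second alternative, $W$ contains at least two edge-disjoint odd cycles $C_1, C_2$, each of which is an odd cycle of $G$; by Lemma \ref{lemma:only in odd}, both $C_1$ and $C_2$ must contain $y$, which contradicts the fact that they share no edges. Hence no primitive binomial of $I_G$ involves $y$.

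By Lemma \ref{lemma: trivial GVD iff not pcew}, this means $y$ defines a degenerate KMY decomposition of $I_G$, contradicting the hypothesis that $y$ defines a nondegenerate KMY decomposition. Thus the assumption that $G\setminus y$ is bipartite is untenable. The main (and essentially only) obstacle is correctly combining the two halves of Lemma \ref{lemma:only in odd} with the two-case classification of Theorem \ref{theorem: pcew classification}(i); once both cases of that classification are ruled out, the proof is immediate.
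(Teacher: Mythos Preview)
Your proof is correct and follows essentially the same approach as the paper: both argue by contradiction, invoke Lemma~\ref{lemma:only in odd} to see that $y$ lies on every odd cycle and on no even cycle, and then use the classification in Theorem~\ref{theorem: pcew classification}(i) to derive a contradiction with the edge-disjointness of the two odd cycles. The only cosmetic difference is that the paper first uses Lemma~\ref{lemma: trivial GVD iff not pcew} to produce a specific primitive closed even walk $W$ containing $y$ and then reaches a contradiction, whereas you show directly that no such $W$ can exist and then apply Lemma~\ref{lemma: trivial GVD iff not pcew}; the logical content is the same.
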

\begin{proof}
Assume towards a contradiction that $G \setminus y$ is bipartite. By Lemma \ref{lemma: trivial GVD iff not pcew}, since $y$ defines a nondegenerate KMY decomposition, the edge $y$ must be part of some primitive closed even walk of $G$. Let us denote the subgraph of $G$ defined by this primitive closed even walk by $W$. By Lemma \ref{lemma:only in odd}, $y$ cannot be in any even cycle of $G$, only odd cycles, and by Theorem \ref{theorem: pcew classification}, $W$ contains at least two odd cycles with no overlap in their edges. But $y$ belongs to all odd cycles of $G$ by Lemma \ref{lemma:only in odd} (otherwise $G \setminus y$ would not be bipartite), a contradiction.
\end{proof}

We conclude this section with a lemma which allows us to iterate the deletion construction alluded to in the previous results. Indeed, starting with a toric ideal of a graph $I_G$, if $y$ defines a KMY decomposition, then $N_{y,I_G}$ is still the toric ideal of a graph. This allows us to compute the height of $I_G$ by induction. 

\begin{lemma}\label{lemma:N ideal deletion ideal}
Let $G$ be a finite simple graph and $<$ be a monomial order on $\mathbb{K}[E(G)]$. If  $e\in E(G)$ then 
$N_{e,I_{G}} = I_{G\setminus e}$. If $M=\{m_1,\ldots, m_r\}$ is the unique minimal monomial generating set of $\init_<(I_G)$, then  $\init_<(I_{G\setminus e})$ is the ideal in $\mathbb{K}[E(G)\setminus e]$ generated by the set of monomials $\{ m_i\in M| e\nmid m_i\}$.
\end{lemma}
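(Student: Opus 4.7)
The plan is to leverage the universal Gr\"obner basis of $I_G$ provided by the primitive binomials, combined with Theorem~\ref{theorem: pcew classification}, in order to transfer KMY data for $I_G$ to the corresponding data for $I_{G\setminus e}$. Throughout, take $<$ to be $e$-compatible (which is implicit whenever $N_{e,I_G}$ is defined), and fix a KMY-form Gr\"obner basis $\{e^{d_i}q_i + r_i\}_{i=1}^{m}$ of $I_G$ whose elements are primitive binomials. The crucial structural observation is: for a primitive binomial $\Gamma = u - v$ whose leading term under $<$ is $u$, $e$-compatibility forces the $e$-degree of $u$ to be at least the $e$-degree of $v$, and coupling this with Theorem~\ref{theorem: pcew classification}(iv) yields that $d_i = 0$ if and only if $\Gamma$ contains no $e$ at all.

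For the first claim, the observation above identifies the generating set $\{q_i : d_i = 0\}$ of $N_{e,I_G}$ with the set of primitive binomials of $G$ that avoid $e$. A short argument shows the latter set coincides with the set of primitive binomials of $G\setminus e$: any such binomial sits in $I_{G\setminus e} = I_G \cap \mathbb{K}[E(G)\setminus e]$, and any binomial witness to non-primitivity in either $I_G$ or $I_{G\setminus e}$ is automatically free of $e$ (since its monomials divide monomials that are already free of $e$). Hence $N_{e,I_G}$ and $I_{G\setminus e}$ share a generating set, so they are equal.

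For the second claim, Theorem~2.1 of \cite{KMY} gives that $\{q_i : d_i = 0\}$ is a Gr\"obner basis of $N_{e,I_G}$ with respect to the restriction of $<$, so $\init_<(I_{G\setminus e}) = \langle \init_<(q_i) : d_i = 0 \rangle$, while $\init_<(I_G) = \langle e^{d_i}\init_<(q_i) : 1 \le i \le m \rangle$. Consequently, each $m_j \in M$ has the form $e^{d_{\sigma(j)}}\init_<(q_{\sigma(j)})$ for some index $\sigma(j)$. If $e \nmid m_j$, then $d_{\sigma(j)} = 0$ (since $e \nmid q_i$ by construction), so $m_j = \init_<(q_{\sigma(j)}) \in \init_<(I_{G\setminus e})$. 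Conversely, any $\init_<(q_i)$ with $d_i = 0$ lies in $\init_<(I_G)$ and involves no $e$, so any minimal generator $m_j \in M$ that divides it also avoids $e$, placing $m_j$ in $M_0 := \{m \in M : e \nmid m\}$. These two inclusions give $\init_<(I_{G\setminus e}) = \langle M_0\rangle$.

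The main obstacle is the clean identification, via $e$-compatibility together with Theorem~\ref{theorem: pcew classification}(iv), of those primitive binomials that appear as ``$d_i = 0$ generators'' in the KMY form; once this dictionary is set up, both claims reduce to straightforward bookkeeping with the KMY Gr\"obner basis and the minimal monomial generating set of $\init_<(I_G)$.
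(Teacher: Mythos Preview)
Your proof is correct. For the second claim, your argument is essentially the same as the paper's: both invoke \cite[Theorem~2.1]{KMY} to conclude that the $d_i=0$ generators form a Gr\"obner basis of $N_{e,I_G}=I_{G\setminus e}$, and then match minimal monomial generators. You spell out the bookkeeping that the paper leaves as ``immediately follows.''

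The real difference is in the first claim. The paper simply cites \cite[Lemma~3.5]{CDSRVT}. You instead give a self-contained argument: working with the universal Gr\"obner basis of primitive binomials, you use $e$-compatibility together with Theorem~\ref{theorem: pcew classification}(iv) to show that a primitive binomial contributes to $N_{e,I_G}$ (i.e., has $d_i=0$) precisely when it avoids $e$ entirely, and then observe that primitive binomials of $G$ avoiding $e$ coincide with primitive binomials of $G\setminus e$ via the elimination description $I_{G\setminus e}=I_G\cap\mathbb{K}[E(G)\setminus e]$. This is a genuinely more elementary route: it avoids the external reference and makes transparent exactly how the graph-theoretic structure of primitive walks (specifically, that $e$ cannot appear in both monomials of a primitive binomial) drives the identification $N_{e,I_G}=I_{G\setminus e}$. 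The paper's citation is shorter, but your argument keeps everything internal to the tools already developed in the paper.
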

\begin{proof}
The first claim follows from \cite[Lemma 3.5]{CDSRVT} since the definition of $N_{e,I_G}$ for KMY decompositions agrees with that for geometric vertex decompositions. For the second claim, let $\{e^{d_1}q_1+r_1,\ldots,e^{d_k}q_k+r_k, h_1,\ldots,h_s\}$ be a Gr\"obner basis of $I_G$ with respect to $<$ as in Definition \ref{def:KMYD}. Then $N_{e,I_G}=\langle h_1,\ldots,h_s\rangle$, which is also equal to $I_{G\setminus e}$ by the above. Furthermore, by \cite[Theorem 2.1]{KMY}, $\{h_1,\ldots, h_s\}$ is a Gr\"obner basis for $N_{e,I_G}$ with respect to the induced monomial order $<$ on $\mathbb{K}[E(G\setminus e)]$. The second claim immediately follows.
\end{proof}

\section{Height Formula Proof}
It is well-known that the height of a toric ideal of graph $G$ can be computed in terms of the number of edges and vertices of $G$. A precise statement of this result appears below in Theorem \ref{theorem: new main}, and a proof due to Villarreal can be found in \cite[Lem. 3.1 and Prop. 3.2]{Villa2}.

In this section, we provide a new proof of this height formula using KMY decompositions. 

We begin with two auxiliary lemmas before proving the height formula. 
First we show that when $G$ is a connected graph and $I_G \neq \langle 0 \rangle$, we can always choose an edge $e$ in our graph such that $e$ defines a \emph{nondegenerate} KMY decomposition and is not a bridge.

\begin{lemma} \label{lemma: not trivial or bridge}
Let $G$ be a connected finite simple graph such that $I_G\neq \langle 0\rangle$. Then there exists an edge $e\in E(G)$ which is not a bridge of $G$ and defines a nondegenerate KMY decomposition of $I_G$.
\end{lemma}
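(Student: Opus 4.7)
The plan is to combine Lemma \ref{lemma: trivial GVD iff not pcew} with the structural classification of primitive closed even walks in Theorem \ref{theorem: pcew classification}, together with the elementary observation that a bridge of a graph cannot lie on any cycle. Since the first lemma tells us exactly which edges define nondegenerate KMY decompositions, the task reduces to a purely combinatorial question about $G$.

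First I would use the hypothesis $I_G \neq \langle 0\rangle$ to produce at least one primitive binomial $\Gamma \in I_G$; let $W$ denote the subgraph of $G$ corresponding to the associated primitive closed even walk. By Lemma \ref{lemma: trivial GVD iff not pcew}, every edge appearing in $W$ defines a nondegenerate KMY decomposition of $I_G$, so it suffices to exhibit an edge of $W$ that is not a bridge of $G$. Since an edge is a bridge precisely when it does not lie on any cycle of $G$, I would aim to locate an edge of $W$ that is contained in a cycle of $G$.

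To do this, I would invoke Theorem \ref{theorem: pcew classification}(i), which says that $W$ is either an even cycle of $G$, or a connected subgraph containing at least two odd cycles with disjoint edge sets. In the first case, every edge of $W$ lies on the cycle $W$ itself, so I may take any edge of $W$. In the second case, I would specifically select an edge lying on one of the two edge-disjoint odd cycles of $W$; such an edge lies on a cycle of $W$, hence on a cycle of $G$. In either case, the chosen edge $e$ is not a bridge of $G$ and defines a nondegenerate KMY decomposition of $I_G$, as required.

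I do not foresee a serious obstacle; the argument is a direct assembly of results already established in the excerpt. The only subtlety worth flagging is that in the second case of the classification, the connecting portion of $W$ between the two odd cycles may consist of path edges that could in principle be bridges of $G$, so one must deliberately pick an edge on one of the odd cycles rather than an arbitrary edge of $W$.
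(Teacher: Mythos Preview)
Your proposal is correct and follows essentially the same approach as the paper: both arguments use $I_G\neq\langle 0\rangle$ to produce a primitive closed even walk, invoke Theorem~\ref{theorem: pcew classification} to find an edge of that walk lying on a cycle (hence not a bridge), and then apply Lemma~\ref{lemma: trivial GVD iff not pcew} to conclude that this edge defines a nondegenerate KMY decomposition. Your explicit case split and the remark about avoiding the connecting path edges are a slightly more detailed version of the paper's one-line observation that ``any primitive closed even walk includes at least one cycle of $G$.''
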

\begin{proof}
Since $I_G$ is not the zero ideal, there exists at least one primitive closed even walk $\Gamma$ of $G$. By Theorem \ref{theorem: pcew classification}, any primitive closed even walk includes at least one cycle of $G$, and therefore, at least one edge of the subgraph $\Gamma$ cannot be a bridge of $G$ (indeed, $e$ is a bridge of $G$ if and only if it is not contained in any cycle of $G$). Furthermore, since $\Gamma$ is primitive, this edge defines a nondegenerate KMY decomposition by Lemma \ref{lemma: trivial GVD iff not pcew}.
\end{proof}

\begin{lemma}\label{lemma: I_G=0}
Let $G$ be a connected finite simple graph. If $I_G=\langle 0\rangle$, then $G$ has no even cycles and at most one odd cycle. 
\end{lemma}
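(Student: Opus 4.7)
The plan is to prove the contrapositive: assume $G$ is connected and either contains an even cycle or at least two distinct odd cycles, and produce a nonzero element of $I_G$.

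If $G$ contains an even cycle $C$ with edges $e_{i_1},\ldots,e_{i_{2\ell}}$ listed in cyclic order, then $C$ is itself a closed even walk, and by \cite[Proposition 10.1.5]{Villa} the binomial $e_{i_1}e_{i_3}\cdots e_{i_{2\ell-1}} - e_{i_2}e_{i_4}\cdots e_{i_{2\ell}}$ lies in $I_G$. Its two monomials are squarefree with disjoint support among the distinct edges of $C$, so the binomial is nonzero.

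Assume instead that $G$ has no even cycle but contains two distinct odd cycles $C_1, C_2$. I first claim that $G$ then contains two \emph{edge-disjoint} odd cycles. If $C_1$ and $C_2$ already are edge-disjoint, there is nothing to do. Otherwise, consider the symmetric difference $E(C_1)\triangle E(C_2)$. This is a nonempty subgraph of $G$ in which every vertex has even degree, so it decomposes into a disjoint union of edge sets of simple cycles of $G$---a standard consequence of the Eulerian condition. Since $G$ has no even cycle, each such cycle is odd. The total edge count of $E(C_1)\triangle E(C_2)$ has the same parity as $|C_1|+|C_2|$, which is even, so the decomposition must consist of an even number of odd cycles, and hence of at least two such cycles, proving the claim.

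Now let $D_1, D_2$ be two edge-disjoint odd cycles in $G$. Connectedness of $G$ yields a shortest path $P$ from a vertex $u\in V(D_1)$ to a vertex $v\in V(D_2)$ (permitting $|P|=0$ if $D_1$ and $D_2$ share a vertex), and minimality ensures the edges of $P$ are disjoint from $E(D_1)\cup E(D_2)$. Form the closed walk $W$ that begins at $u$, traverses $D_1$ back to $u$, then $P$ from $u$ to $v$, then $D_2$ back to $v$, and finally $P$ in reverse back to $u$. Its length $|D_1|+|D_2|+2|P|$ is even, so $W$ is a closed even walk, and \cite[Proposition 10.1.5]{Villa} produces a binomial $\Gamma_W\in I_G$. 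Each edge of $D_1$ appears exactly once in $W$, hence in precisely one of the two monomials of $\Gamma_W$, so the two monomials differ and $\Gamma_W\neq 0$, contradicting $I_G=\langle 0\rangle$. The principal technical step is the symmetric-difference reduction to two edge-disjoint odd cycles; the dumbbell-walk construction and the non-vanishing checks for the associated binomials are routine bookkeeping.
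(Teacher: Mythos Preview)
Your proof is correct and follows the same overall strategy as the paper: argue by contrapositive, dispatch the even-cycle case directly, and in the two-odd-cycles case build a closed even walk whose associated binomial is visibly nonzero.

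The one genuine difference is in how the overlapping-cycle sub-case is handled. The paper does not reduce to edge-disjoint odd cycles; instead it simply traces $C_1$ followed by $C_2$ from a shared vertex (or via a connecting path if vertex-disjoint) and asserts this yields a closed even walk giving a contradiction, without explicitly checking the binomial is nonzero. Your symmetric-difference argument---decomposing $E(C_1)\triangle E(C_2)$ into cycles, all necessarily odd since $G$ has no even cycles, and counting parity to get at least two of them---is an additional step that buys you a cleaner endgame: once $D_1$ and $D_2$ are edge-disjoint (and the shortest path $P$ is automatically edge-disjoint from both), any single edge of $D_1$ occurs exactly once in the dumbbell walk, so it lands in exactly one of the two monomials and the binomial cannot vanish. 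This makes the non-vanishing check transparent, whereas the paper's version leaves that verification implicit.
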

\begin{proof}
Since $I_G=\langle 0\rangle$, $G$ does not have any closed even walks and thus no primitive closed even walks. By Theorem \ref{theorem: pcew classification}, $G$ cannot contain any even cycles. Hence, if $G$ is bipartite, then $G$ must be acyclic and connected, so it must be a tree.
    
If $G$ is not bipartite, then $G$ has at least one odd cycle. Since $G$ has no closed even walks, $G$ cannot have more than one odd cycle. Indeed, if there were two odd cycles, they would either intersect or be disjoint with at least one path connecting them (since $G$ is connected). Tracing either of these subgraphs out would result in a closed even walk. In the first case, starting at a point of intersection, trace out one cycle entirely followed by the second cycle entirely. In the second case, the path connecting the two odd cycles needs to be traversed twice. Either way, the result is a closed even walk, a contradiction.
\end{proof}

\begin{remark}\label{rem: zero cases}
The previous result shows that if $G$ is a connected graph with $I_G=\langle 0\rangle$, then $G$ is either a tree (the bipartite case), or a tree with one edge added to create an odd cycle (the non-bipartite case).
\end{remark}

We now prove the following height formula for the toric ideal of a graph using the methods that we have developed above. This result is known and can be found in \cite[Lem. 3.1 and Prop. 3.2]{Villa2}.

\begin{theorem} \label{theorem: new main}
Let $G$ be a connected finite simple graph with $p$ vertices, $q$ edges, and toric ideal $I_G \subset \mathbb{K}[E(G)]$. Then 
\begin{equation*}
    \height(I_G)=
    \begin{cases}
      q-p & \text{if $G$ is not bipartite,}\\
      q-p+1 & \text{if $G$ is bipartite.}
    \end{cases}
  \end{equation*}
\end{theorem}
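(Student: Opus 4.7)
The plan is to induct on the number of edges $q$ of $G$, using the lemmas developed above to reduce to a strictly smaller connected graph via edge deletion.

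For the base case, I would handle the situation $I_G = \langle 0 \rangle$. By Lemma \ref{lemma: I_G=0} and Remark \ref{rem: zero cases}, a connected $G$ with $I_G = \langle 0 \rangle$ is either a tree (in which case $G$ is bipartite and $q = p-1$, giving $q-p+1 = 0 = \height(\langle 0 \rangle)$) or a tree with one additional edge creating an odd cycle (in which case $G$ is non-bipartite and $q = p$, giving $q-p = 0 = \height(\langle 0 \rangle)$). Both subcases match the claimed formula.

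For the inductive step, assume $I_G \neq \langle 0 \rangle$. By Lemma \ref{lemma: not trivial or bridge}, there exists an edge $e \in E(G)$ which is not a bridge of $G$ and which defines a nondegenerate KMY decomposition of $I_G$. Since $e$ is not a bridge, $G \setminus e$ is still connected, and it has $p$ vertices and $q-1$ edges. Since $I_G$ is prime, $R/I_G$ is a domain, hence equidimensional, so Theorem \ref{theorem: new KR 2.8} applies and gives
\[
\height(I_G) \;=\; \height(N_{e,I_G}) + 1.
\]
By Lemma \ref{lemma:N ideal deletion ideal}, $N_{e,I_G} = I_{G\setminus e}$, so $\height(I_G) = \height(I_{G\setminus e}) + 1$.

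Now I split on whether $G$ is bipartite. If $G$ is bipartite, then so is $G \setminus e$ (removing an edge preserves bipartiteness), and by the inductive hypothesis $\height(I_{G\setminus e}) = (q-1) - p + 1 = q - p$, so $\height(I_G) = q - p + 1$, as desired. If $G$ is non-bipartite, then Proposition \ref{prop:notBipartiteGVD} guarantees that $G \setminus e$ is also non-bipartite, so the inductive hypothesis gives $\height(I_{G\setminus e}) = (q-1) - p$, whence $\height(I_G) = q - p$. This completes the induction.

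The only part that requires care is verifying the hypotheses of Theorem \ref{theorem: new KR 2.8} along the way, specifically that each deleted graph remains connected (ensured by picking $e$ non-bridge via Lemma \ref{lemma: not trivial or bridge}) and that each $I_{G\setminus e}$ is equidimensional (automatic since toric ideals of graphs are prime). The combinatorial input from Proposition \ref{prop:notBipartiteGVD} is what makes the non-bipartite case propagate correctly through the induction, so that is the conceptual heart of the argument; everything else is bookkeeping around the recursion $q \mapsto q-1$.
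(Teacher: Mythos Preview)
Your proof is correct and follows essentially the same approach as the paper's own proof: both use Lemma \ref{lemma: not trivial or bridge} to find a non-bridge edge giving a nondegenerate KMY decomposition, invoke Theorem \ref{theorem: new KR 2.8} and Lemma \ref{lemma:N ideal deletion ideal} to obtain $\height(I_G)=\height(I_{G\setminus e})+1$, and use Proposition \ref{prop:notBipartiteGVD} (respectively, the trivial observation that subgraphs of bipartite graphs are bipartite) to carry the bipartite/non-bipartite dichotomy through the recursion. The only difference is cosmetic: the paper inducts on the number $w$ of primitive closed even walks of $G$, whereas you induct on the number of edges $q$; either quantity strictly decreases under the edge deletion, so both inductions terminate in the $I_G=\langle 0\rangle$ case handled via Lemma \ref{lemma: I_G=0} and Remark \ref{rem: zero cases}.
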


\begin{proof}
To prove the claim, we use strong induction on $w$, the number of primitive closed even walks of $G$ (that is, the number of primitive binomials of $I_G$ which correspond to closed even walks of $G$).

First, suppose that $w=0$. Then $I_G=\langle 0 \rangle$, and $\height(I_G)=0$. By Remark \ref{rem: zero cases}, when $G$ is bipartite, $G$ is a tree. Thus, $p=q+1$ and $\height(I_G)=0=q-p+1$. By the same remark, when $G$ is not bipartite, $G$ can be constructed by taking a tree (which has $p-1$ edges) and adding an additional edge to the graph, and so $q=(p-1)+1=p$. Therefore, $\height(I_G)=0=q-p$.

Next, let $w\geq 0$ and assume that the statement holds for all graphs with $w$ or fewer primitive closed even walks. Consider a connected graph $G$ with $q$ edges and $p$ vertices and $w+1$ primitive closed even walks. Since $w\geq 0$, we have $I_G \neq \langle 0 \rangle$, and so by Lemma \ref{lemma: not trivial or bridge} there always exists an edge of $G$ that is not a bridge and defines a nondegenerate KMY decomposition of $I_G$. Let us choose $y$ to be such an edge, so that $G_1:=G \setminus y$ is connected, and $w_1 < w$ where $w_1$ is the number of primitive closed even walks of $G_1$.

Thus by our inductive hypothesis, Lemma \ref{lemma:N ideal deletion ideal} and Theorem \ref{theorem: new KR 2.8} (recall that toric ideals of graphs are prime and hence equidimensional), it follows that   
\begin{equation*}
        \height(I_G)=\height(I_{G\setminus y}) + 1= 
    \begin{cases}
      ((q-1)-p)+1 & \text{if $G_1$ is not bipartite,}\\
      ((q-1)-p+1)+1 & \text{if $G_1$ is bipartite,}
    \end{cases}
\end{equation*}
noting that in an edge deletion, only the edge is removed and not the vertices, so $V(G)=V(G\setminus y)$. Finally, the conditions need to be stated in terms of whether $G$ is bipartite. We note that if $G$ is not bipartite, then by Proposition \ref{prop:notBipartiteGVD}, $G_1$ is also not bipartite, so the first condition above can be restated as ``if $G$ is not bipartite". On the other hand, if $G$ is bipartite, then $G_1$ must also be bipartite (the chromatic number can only decrease when deleting edges). 

Therefore, $\height(I_G) = q-p$ if $G$ is not bipartite, and $\height(I_G)=q-p+1$ if $G$ is bipartite, proving the result.
\end{proof}

\begin{remark}\label{remark: deletion sequence}
Let $e_1,\ldots,e_k$ be any sequence of edges and define $G_i:=G\setminus\{e_1,\ldots, e_i\}$ for $1\leq i\leq k$ (that is, the graph formed by deleting $e_1,\ldots,e_i$ from $G$). We set $G_0:=G$. Each $e_i$ defines a KMY decomposition of $I_{G_{i-1}}$ for $1\leq i\leq k$, and if $I_{G_k}=\langle 0\rangle$, then the theorem shows that the height of $I_G$ is also equal to the number of nondegenerate KMY decompositions in the sequence. This follows from the fact that $\height(I_{G_{i-1}})=\height(I_{G_i})$ if $e_i$ defines a degenerate KMY decomposition. Some care must be taken if any of the edges are a bridge, since the graph deletion would be disconnected, but the result still holds since $\height(I_{H_1\sqcup H_2}) = \height(I_{H_1})+\height(I_{H_2})$.
\end{remark}

\section{A bound on $\chi(G)$ using Gr\"obner degeneration}

In this section, we will provide a bound on the chromatic number of a graph $G$ in terms of a Gr\"obner basis of $I_G$. This bound follows from the simple observation that computing $\init_<(I_{G\setminus e})$ with respect to some monomial order $<$ can easily be computed from $\init_<(I_G)$ using KMY decompositions. At the same time, deleting an edge reduces the chromatic number by at most 1. This fact will be clear to experts, but we include the brief argument here for completeness.

\begin{lemma}\label{lem: chromatic_reduction}
Let $G$ be finite simple graph and $e\in E(G)$. Then \[0\leq \chi(G) - \chi(G\setminus e) \leq 1.\]
\end{lemma}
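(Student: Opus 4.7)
The plan is to establish the two inequalities separately using only elementary observations about proper vertex colorings. The underlying idea is that removing an edge can only relax the constraints of a proper coloring, while adding an edge back in the worst case forces a single additional color at one endpoint.

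For the lower bound $\chi(G\setminus e) \leq \chi(G)$, I would simply observe that any proper vertex coloring of $G$ is automatically a proper vertex coloring of $G\setminus e$, since the latter graph has the same vertex set and a subset of the edges. Applying this to an optimal coloring of $G$ gives the bound.

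For the upper bound $\chi(G) \leq \chi(G\setminus e)+1$, I would start with an optimal proper coloring $c$ of $G\setminus e$ using $\chi(G\setminus e)$ colors, and write $e = \{u,v\}$. There are two cases. If $c(u) \neq c(v)$, then $c$ is already a proper coloring of $G$, giving $\chi(G) \leq \chi(G\setminus e)$. Otherwise $c(u) = c(v)$, and I define a new coloring $c'$ by setting $c'(u)$ to be a fresh color not appearing in the image of $c$ and $c'(w) = c(w)$ for all other vertices $w$. This $c'$ uses at most $\chi(G\setminus e)+1$ colors and is a proper coloring of $G$: it agrees with $c$ on all edges of $G\setminus e$ not incident to $u$, uses a new color at $u$ (so no conflict with any neighbor in $G$, including $v$), and preserves properness on edges incident to $u$ since $u$'s color is entirely new.

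There is no real obstacle here; the only thing to be slightly careful about is the case analysis in the upper bound, since using a new color only works when we are sure it does not create any new coloring conflicts, which follows because the color is unused elsewhere in the graph. Combining the two inequalities yields $0 \leq \chi(G) - \chi(G\setminus e) \leq 1$.
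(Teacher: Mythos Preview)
Your proof is correct and follows essentially the same approach as the paper: both argue the upper bound by taking an optimal coloring of $G\setminus e$, splitting into cases according to whether the endpoints of $e$ receive the same color, and recoloring one endpoint with a fresh color if necessary. You are slightly more explicit than the paper in justifying the lower bound (the paper leaves $\chi(G\setminus e)\leq \chi(G)$ implicit), but the content is the same.
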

\begin{proof}
Let $e=\{u,v\}$, and suppose that $\mathcal{C}$ is a (minimal) vertex  coloring of $G\setminus e$. If $u$ and $v$ have different colors in $\mathcal{C}$, then $\mathcal{C}$ remains a coloring of $G$ after adding the edge $e$ to the graph $G\setminus e$. That is $\chi(G)=\chi(G\setminus e)$. On the other hand, if $u$ and $v$ have the same color in $\mathcal{C}$, then we define a new coloring $\mathcal{C}'$ which equals $\mathcal{C}$ everywhere except at $u$. Color $u$ with a new color not included in $\mathcal{C}$. Then $\mathcal{C}'$ is a $\chi(G\setminus e)+1$ coloring of $G$, proving the result.
\end{proof}

This result allows us to bound $\chi(G)$ by deleting edges of $G$ until the resulting graph has an associated toric ideal which is the zero ideal (ie. until there are no other primitive closed even walks left). We start with an example to demonstrate this approach. 

\begin{example}
Let $G$ be the graph defined by two 4-cycles glued at an edge. 

\begin{figure}[!h]
\centering
\begin{tikzpicture}[shorten >=1pt, auto,scale=1,
   node_style/.style={circle,draw=black,scale=0.25},
   edge_style/.style={draw=black}]
    \node[node_style] (v1) at (-2,2) {};
    \node[node_style] (v2) at (0,2) {};
    \node[node_style] (v3) at (2,2) {};
    \node[node_style] (v4) at (2,0) {};
    \node[node_style] (v5) at (0,0) {};
    \node[node_style] (v6) at (-2,0) {};
    \draw[edge_style]  (v1) edge node{$e_1$} (v2);
    \draw[edge_style]  (v2) edge node{$e_2$} (v3);
    \draw[edge_style]  (v3) edge node{$e_3$} (v4);
    \draw[edge_style]  (v4) edge node{$e_4$} (v5);
    \draw[edge_style]  (v5) edge node{$e_5$} (v6);
    \draw[edge_style]  (v6) edge node{$e_6$} (v1);
    \draw[edge_style]  (v2) edge node{$e_7$} (v5);
\end{tikzpicture}
\label{fig:2 4-cycles}
\end{figure}

The toric ideal $I_G$ is generated by the primitive binomials corresponding to the two 4-cycles of the graph:  \[
I_G = \langle e_1e_5-e_6e_7, e_2e_4-e_3e_7 \rangle \subseteq \mathbb{K}[e_1, e_2, e_3, e_4, e_5, e_6, e_7].\]

\noindent Let $H_1=G\setminus e_6$. Then it is not difficult to compute that \[I_{H_1}=\langle  e_2e_4-e_3e_7\rangle. \]
At the same time, by Lemma \ref{lem: chromatic_reduction}, $\chi(G)-1\leq \chi(H_1)\leq \chi(G)$. Continuing, we can delete $e_3$ to get the graph $H_2=H_1\setminus e_3$. Then $I_{H_2} = \langle 0\rangle$. On the other hand, the deletion drops the chromatic number by at most one. 

Now that $I_{H_2}=\langle 0\rangle$, we can conclude that $\chi(H_2)\leq 3$ by Lemma \ref{lemma: I_G=0}. Indeed, the chromatic number of a single cycle is at most 3, and the chromatic number of a tree is at most 2. Using this, it is straightforward to show that a graph satisfying the conditions of Lemma \ref{lemma: I_G=0} has chromatic number bounded above by 3. Therefore, \[\chi(G)\leq \chi(H_2)+2 = 5.\qedhere\]
\end{example}

Notice that in the last example, the same bound could have been achieved by working with $\init_<(I_G)$ instead of $I_G$. Indeed, a generator of $I_G$ does not appear in $I_{H_1}$ if it contained the variable $e_6$. The same result can be achieved if we considered any lexicographic order where $e_6>e_3>f$ where $f\in \{e_1,e_2,e_4,e_5,e_7\}$. Then 
\[\init_<(I_G) = \langle e_6e_7,e_3e_7,e_2e_4e_6\rangle \hspace{2mm}\longrightarrow \hspace{2mm} \init_<(I_{H_1}) = \langle e_3e_7\rangle \hspace{2mm}\longrightarrow\hspace{2mm} \init_<(I_{H_2}) = \langle 0\rangle.\]

Finding a sequence of edges which result in the zero ideal can now be phrased as finding a minimal ideal of indeterminates which contains $\init_<(I_G)$. In the previous example, $\init_<(I_G) \subset \langle e_6,e_3\rangle$, so $\chi(G)\leq |\{e_6,e_3\}|+3$. 

By \emph{minimal}, we mean a subset $\mathcal{E}$ of the variables for which the property holds, and such that no proper subset $\mathcal{E}'\subset \mathcal{E}$ also satisfies the property. Our initial choice of $\mathcal{E}$ however need not be a set with the smallest cardinality among the sets that satisfy the property. For instance, if $I$ is an ideal such that $\init_<(I) = \langle xy,xz \rangle$, then we can take $\mathcal{E}=\{x\}$ or $\mathcal{E}=\{y,z\}$. Both are minimal (even though one obviously leads to a better bound). This leads to the following result.

\begin{theorem}\label{thm: chromatic_bound}
Let $G$ be a connected finite simple graph defining the toric ideal $I_G \subset R=\mathbb{K}[e_1,\ldots,e_n]$ and let $<$ be a monomial order on $R$. Suppose that $\mathcal{E}\subseteq \{e_1,\ldots,e_n\}$ is a (possibly empty) minimal set of variables such that $\init_<(I_G)\subseteq \langle \mathcal{E}\rangle$. Then 

\[\chi(G)\leq |\mathcal{E}| +3.\]
\end{theorem}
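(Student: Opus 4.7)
The plan is to delete the edges in $\mathcal{E}$ from $G$ one at a time. By Lemma \ref{lem: chromatic_reduction}, each such deletion decreases the chromatic number by at most $1$, so iterating yields $\chi(G) \leq \chi(G \setminus \mathcal{E}) + |\mathcal{E}|$. The whole proof then reduces to showing that $\chi(G \setminus \mathcal{E}) \leq 3$, and the strategy is to use the hypothesis $\init_<(I_G) \subseteq \langle \mathcal{E} \rangle$ to force the toric ideal of $G \setminus \mathcal{E}$ to be zero and then invoke the structural classification of graphs with zero toric ideal.

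The first step of the reduction is to show that $I_{G \setminus \mathcal{E}} = \langle 0 \rangle$. Fixing any ordering $\mathcal{E} = \{e_{i_1}, \ldots, e_{i_k}\}$ and iterating Lemma \ref{lemma:N ideal deletion ideal}, the unique minimal monomial generating set of $\init_<\bigl(I_{G \setminus \{e_{i_1}, \ldots, e_{i_j}\}}\bigr)$ is the subset of the minimal monomial generating set of $\init_<(I_G)$ consisting of monomials not divisible by any $e_{i_\ell}$ with $\ell \leq j$. Since every monomial generator of $\init_<(I_G)$ already lies in $\langle \mathcal{E} \rangle$, none survive after all $k$ deletions, so $\init_<(I_{G \setminus \mathcal{E}}) = \langle 0 \rangle$, which forces $I_{G \setminus \mathcal{E}} = \langle 0 \rangle$.

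Next, I would decompose $G \setminus \mathcal{E}$ into connected components. Because the toric ideal of a disjoint union of graphs decomposes as the sum of the toric ideals of its components (each sitting in its own disjoint set of edge variables), the vanishing of $I_{G \setminus \mathcal{E}}$ forces the toric ideal of each component to vanish. Applying Lemma \ref{lemma: I_G=0} (equivalently, Remark \ref{rem: zero cases}) componentwise, each component is either a tree or a tree together with one extra edge creating a single odd cycle. In either case the component is $3$-colorable: trees are $2$-colorable, and the ``tree plus one odd-cycle-closing edge'' graphs admit a $3$-coloring by $3$-coloring the unique odd cycle and propagating a valid coloring along the attached tree branches. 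Since the chromatic number of a graph is the maximum chromatic number over its connected components, $\chi(G \setminus \mathcal{E}) \leq 3$, and combining with the chromatic reduction estimate yields the desired bound.

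The main potential obstacle is the bookkeeping around the disconnected case: I need the vanishing of the toric ideal and the chromatic bound to pass cleanly to each connected component of $G \setminus \mathcal{E}$. However, both facts (decomposition of the toric ideal of a disjoint union as a sum, and the chromatic number of a disjoint union as the maximum over components) are elementary, so I expect these to be short verifications rather than genuine difficulties. Otherwise, the argument is a clean combination of the iterative initial-ideal computation furnished by Lemma \ref{lemma:N ideal deletion ideal} with the structural classification in Lemma \ref{lemma: I_G=0}.
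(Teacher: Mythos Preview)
Your proof is correct and follows essentially the same strategy as the paper's: delete the edges in $\mathcal{E}$, use Lemma~\ref{lemma:N ideal deletion ideal} to force $I_{G\setminus\mathcal{E}}=\langle 0\rangle$, and then apply Lemma~\ref{lemma: I_G=0} for the $3$-colorability. The paper organizes this as an induction on $|\mathcal{E}|$ (peeling off one edge at a time), whereas you delete all edges at once and explicitly handle the possibility that $G\setminus\mathcal{E}$ is disconnected---a point the paper's inductive step leaves implicit.
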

\begin{proof}

We proceed by induction on $r=|\mathcal{E}|$. If $r=0$, then $I_G=\langle 0\rangle$. By Lemma \ref{lemma: I_G=0}, $I_G$ has no even cycles and at most one odd cycle, so $\mathcal{E}=\emptyset$, $\langle 0 \rangle = \init_<(I_G)\subseteq \langle \mathcal{E}\rangle = \langle 0\rangle$, and $\chi(G)\leq |\mathcal{E}|+3 = 3$. Assume the result is true for $r=k$ and consider a graph $G$ such that $|\mathcal{E}|=k+1$. Take any variable $e\in \mathcal{E}$. By definition of $\mathcal{E}$, the edge $e$ appears in at least one primitive closed even walk of $G$. 

By Lemma \ref{lemma:N ideal deletion ideal}, $N_{e,I_{G}} = I_{G\setminus e}$, and $\init_<(I_{G\setminus e})$ is generated by those monomials in $\init_<(I_G)$ which are not divisible by $e$. In particular, $\init_<(I_{G\setminus e}) \subseteq \langle\mathcal{E}\setminus e \rangle$ and $\mathcal{E}\setminus e$ is minimal. By induction, \[\chi(G\setminus e)\leq |\mathcal{E}\setminus e| +3 = |\mathcal{E}|+2.\]

On the other hand, \[\chi(G)\leq \chi(G\setminus e)+1\leq |\mathcal{E}|+2+1 = |\mathcal{E}|+3,\] as required.
\end{proof}

\begin{remark}
The initial ideal $\init_<(I_G)$ is the edge ideal of a hypergraph $X$ if $\init_<(I_G)$ is squarefree (where each monomial defines an edge of $X$). In this case, $\mathcal{E}$ is a choice of a minimal vertex cover of $X$. In particular, choosing $\mathcal{E}$ with cardinality equal to the vertex covering number of $X$ provides the best possible bound on $\chi(G)$ in Theorem \ref{thm: chromatic_bound}.
\end{remark}

It is well-known that for any graph $G$, the chromatic number is bounded by the maximum degree $\Delta_G$ of a vertex of $G$ plus 1. It is natural to ask about how close of bound $|\mathcal{E}|+3$ is compared to $\chi(G)$ and compared to the bound of $\Delta_G+1$. While the actual chromatic number is usually strictly lower than $|\mathcal{E}|+3$, it can nonetheless provide a very close bound. The next three examples provide instances where our bound is better than, equal to, and worse than $\Delta_G+1$. \newpage 

\begin{example}

Let $G$ be the extended bow-tie graph pictured below. 
    \begin{figure}[h!]
\centering
    \begin{tikzpicture}[scale=0.5]
      
      \draw (0,3) -- (0,9)node[midway,  left] {$e_1$};
      \draw (4.5,6) -- (0,9) node[midway, above] {$e_3$};
      \draw (0,3) -- (4.5,6) node[midway, below] {$e_2$};
      \draw (4.5,6) -- (7.5,6) node[midway, above] {$e_4$};
      \draw (7.5,6) -- (10.5,6) node[midway, above] {$e_5$};
      \draw (10.5,6) -- (14,3) node[midway, below] {$e_6$};
      \draw (10.5,6) -- (14,9) node[midway, above] {$e_7$};
      \draw (14,3) -- (14,9) node[midway, right] {$e_8$};

      \fill[fill=white,draw=black] (0,3) circle (.1) node[left]{};
      \fill[fill=white,draw=black] (0,9) circle (.1) node[left]{};
      \fill[fill=white,draw=black] (4.5,6) circle (.1) node[below]{};
      \fill[fill=white,draw=black] (7.5,6) circle (.1) node[below]{};
      \fill[fill=white,draw=black] (10.5,6) circle (.1) node[below]{};
      \fill[fill=white,draw=black] (14,3) circle (.1) node[right]{};
      \fill[fill=white,draw=black] (14,9) circle (.1) node[right]{};
     
    \end{tikzpicture}
    \label{fig:2triangles}
\end{figure}

Then $I_G = \langle e_1e_4^2e_6e_7 - e_2e_3e_5^2e_8 \rangle$, and picking any lexicographic order where $e_1$ has the greatest weight will yield \[\init_<(I_G) = \langle e_1e_4^2e_6e_7\rangle. \]
Then $\{e_1\}$ is a set which satisfies the assumptions of the conjecture. Therefore, $|\mathcal{E}|=1$ and  \[\chi(G) \leq 4.\] By inspection we see that $\chi(G)=3$ and $\Delta_G+1=4$.
\end{example}

\begin{corollary}\label{cor: principal}
Let $G$ be a finite simple graph such that $I_G$ is a principal ideal. Then $\chi(G)\leq 4$.
\end{corollary}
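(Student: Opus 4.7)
The plan is to apply Theorem \ref{thm: chromatic_bound} with a set $\mathcal{E}$ of size at most one. Write $I_G = \langle f \rangle$ and fix any monomial order $<$ on $\mathbb{K}[E(G)]$. Since $I_G$ is principal, $\init_<(I_G) = \langle \init_<(f) \rangle$ is also principal as a monomial ideal; say its generator is $m = e_{i_1}^{a_1}\cdots e_{i_s}^{a_s}$. Then $\mathcal{E} := \{e_{i_1}\}$ satisfies $\init_<(I_G) \subseteq \langle \mathcal{E} \rangle$, and it is minimal in the sense of Theorem \ref{thm: chromatic_bound} because the only proper subset, $\emptyset$, fails the containment whenever $I_G \neq \langle 0 \rangle$. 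Plugging $|\mathcal{E}| = 1$ into the theorem gives $\chi(G) \leq 4$ immediately.

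The one technicality is that Theorem \ref{thm: chromatic_bound} assumes $G$ is connected, while the corollary as stated does not. To address this, first note that if $G$ has connected components $G_1, \ldots, G_r$, then $I_G = I_{G_1} + \cdots + I_{G_r}$ and the summands live in pairwise disjoint sets of edge variables. Consequently, if $I_G$ is principal then at most one $I_{G_i}$ is nonzero; the remaining components $G_j$ have $I_{G_j} = \langle 0 \rangle$ and hence (by Lemma \ref{lemma: I_G=0} and the discussion in Remark \ref{rem: zero cases}) satisfy $\chi(G_j) \leq 3$. Since $\chi(G) = \max_i \chi(G_i)$, the bound $\chi(G) \leq 4$ reduces to the connected case, which is handled by the argument above (with the case $I_G = \langle 0 \rangle$ giving the stronger bound $\chi(G) \leq 3$ by Lemma \ref{lemma: I_G=0}).

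I do not anticipate any genuine obstacle: the result is essentially Theorem \ref{thm: chromatic_bound} applied in its simplest nontrivial instance, and the only item requiring attention is the routine reduction to the connected case outlined above.
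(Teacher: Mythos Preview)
Your proof is correct and follows essentially the same approach as the paper: pick a single edge variable dividing the leading monomial of the generator and apply Theorem~\ref{thm: chromatic_bound} with $|\mathcal{E}|=1$. In fact, your reduction to the connected case is more careful than the paper's own proof, which tacitly invokes Theorem~\ref{thm: chromatic_bound} and Lemma~\ref{lemma: I_G=0} without addressing that both are stated only for connected graphs.
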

\begin{proof}
If $I_G=\langle 0\rangle$, then the result holds by Lemma \ref{lemma: I_G=0}. Otherwise, $I_G = \langle f\rangle$ for some non-zero $f\in \mathbb{K}[E(G)]$, and for any lexicographic order $<$ on $\mathbb{K}[E(G)]$, $\init_<(I_G) = \langle \init_<(f)\rangle$. Clearly $\langle \init_<(f)\rangle \subset \langle e_i\rangle$ for any $e_i$ which divides $\init_<(f)$. Then $|\mathcal{E}|=1$, and the result follows. 
\end{proof}

\begin{example}
Let $G=K_4$, the complete graph on 4 vertices, pictured below:

\begin{figure}[ht!]
\centering
    \begin{tikzpicture}[scale=0.6]
      
      \draw (-4,0) -- (0,6)node[midway,  left] {$e_1$};
      \draw (-4,0) -- (0,2.5) node[midway, above] {$e_2$};
      \draw (0,2.5) -- (0,6) node[midway, right] {$e_3$};
      \draw (4,0) -- (0,6) node[midway, right] {$e_4$};
      \draw (0,2.5) -- (4,0) node[midway, above] {$e_5$};
      \draw (-4,0) -- (4,0) node[midway, below] {$e_6$};

      \fill[fill=white,draw=black] (-4,0) circle (.1) node[left]{};
      \fill[fill=white,draw=black] (4,0) circle (.1) node[left]{};
      \fill[fill=white,draw=black] (0,2.5) circle (.1) node[below]{};
      \fill[fill=white,draw=black] (0,6) circle (.1) node[below]{};
     
    \end{tikzpicture}
    \label{fig:2triangles}
\end{figure}

Using Macaulay2, we can check that the generating set $I_G=\langle e_1e_5-e_3e_6, e_2e_4-e_3e_6\rangle$ is a Gr\"obner basis with respect to the lexicographic order $e_1>e_2>\ldots>e_6$. Then \[\init_<(I_G) = \langle e_1e_5,e_2e_4\rangle \subset \langle e_1,e_2\rangle,\] so $|\mathcal{E}|=2$ and $\chi(G)\leq 5$, which differs from the actual chromatic number by 1. In this case, $\Delta_G+1=4$ provides a better bound.
\end{example}  

\begin{example}
The bow-tie graph $G$ below has a principal toric ideal $I_G$, so by Corollary \ref{cor: principal}, $\chi(G)\leq 4$. \newpage 
    \begin{figure}[h!]
\centering
    \begin{tikzpicture}[scale=0.5]
      
      \draw (0,3) -- (0,9)node[midway,  left] {$e_1$};
      \draw (4.5,6) -- (0,9) node[midway, above] {$e_3$};
      \draw (0,3) -- (4.5,6) node[midway, below] {$e_2$};
      \draw (4.5,6) -- (9,3) node[midway, below] {$e_4$};
      \draw (4.5,6) -- (9,9) node[midway, left] {$e_5$};
      \draw (9,3) -- (9,9) node[midway, right] {$e_6$};

      \fill[fill=white,draw=black] (0,3) circle (.1) node[left]{};
      \fill[fill=white,draw=black] (0,9) circle (.1) node[left]{};
      \fill[fill=white,draw=black] (4.5,6) circle (.1) node[below]{};
      \fill[fill=white,draw=black] (9,3) circle (.1) node[right]{};
      \fill[fill=white,draw=black] (9,9) circle (.1) node[right]{};
     
    \end{tikzpicture}
\end{figure}

\noindent Its actual chromatic number is 3, and $\Delta_G+1=5$, so Theorem \ref{thm: chromatic_bound} provides a better bound in this case.
\end{example}

\end{document}